\newtheorem{lem}{Lemma}
\newtheorem{thm}{Theorem}
\newtheorem{prop}{Proposition}
\newtheorem{oss}{Remark}
\begin{document}

\title{\large\textbf{STRONG SOLUTIONS FOR TWO-DIMENSIONAL \\ NONLOCAL CAHN-HILLIARD-NAVIER-STOKES SYSTEMS}}

\author{
{\sc Sergio Frigeri}\\
Dipartimento di Matematica {\it F. Enriques}
\\Universit\`{a} degli Studi di Milano\\Milano I-20133, Italy\\
\textit{sergio.frigeri@unimi.it}
\\
\\
{\sc Maurizio Grasselli}\\
Dipartimento di Matematica {\it F. Brioschi}\\
Politecnico di Milano\\
Milano I-20133, Italy \\
\textit{maurizio.grasselli@polimi.it}
\\
\\
{\sc Pavel Krej\v{c}\'{\i}}\\
Czech Academy of Sciences\\
CZ-11567 Praha 1, Czech Republic\\
\textit{krejci@math.cas.cz}}

\maketitle

\begin{abstract}\noindent
A well-known diffuse interface model for incompressible isothermal mixtures of two immiscible fluids
consists of the Navier-Stokes system coupled with a convective Cahn-Hilliard equation. In some recent contributions
the standard Cahn-Hilliard equation has been replaced by its nonlocal version.
The corresponding system
is physically more relevant and mathematically more challenging. Indeed, the only known results are essentially the existence of a global weak solution and the existence of a suitable notion of global attractor for the corresponding dynamical system defined without uniqueness.
In fact, even in the two-dimensional case, uniqueness of weak solutions is still an open problem.
Here we take a step forward in the case of regular potentials. First we prove the existence of a (unique) strong solution in two dimensions.
Then we show that any weak solution regularizes in finite time uniformly with respect to bounded sets of initial data.
This result allows us to deduce that the global attractor is the union of all the bounded complete trajectories which are strong solutions. We also demonstrate that each trajectory converges to a single equilibrium, provided that the potential is real analytic and the external forces vanish.
\\
\\
\noindent \textbf{Keywords}: Navier-Stokes equations, nonlocal
Cahn-Hilliard equations, regular potentials, incompressible binary
fluids, strong solutions, global attractors, convergence to equilibrium,
{\L}ojasiewicz-Simon inequality.
\\
\\
\textbf{AMS Subject Classification 2010}: 35Q30, 37L30, 45K05, 76D03, 76T99.
\end{abstract}

\section{Introduction}\setcounter{equation}{0}
The evolution of an incompressible mixture of two immiscible fluids can be described
through a diffuse interface model (cf., e.g., \cite{GPV,HMR,JV,M} and their references).
Assuming that the temperature variations are negligible, taking the density is equal to one, and suppose
the viscosity $\nu$ to be constant, the model H (see \cite{HH}) reduces to the so-called  Cahn-Hilliard-Navier-Stokes system
\begin{align*}
& \varphi_t+u\cdot\nabla\varphi=\nabla\cdot (\kappa\nabla\mu), \\
&\mu=-\Delta\varphi+F^\prime(\varphi),  \\
&u_t-\nu\Delta u+(u\cdot\nabla)u+\nabla\pi=\mu\nabla\varphi+h(t),
 \\
&\mbox{div}(u)=0,
\end{align*}
in $\Omega\times (0,\infty)$, where $\Omega \subset
\mathbb{R}^d$, $d=2,3$, is a bounded domain.
Here $u$ denotes the (average) velocity and $\varphi$ is the difference of the two fluid concentrations.
Moreover, $\kappa>0$ is the mobility coefficient, $F$ is a suitable double well potential density, $\pi$ the pressure and
$h$ a given external (non-gradient) force.

The existing theoretical literature (see, for instance, \cite{A1,A2,B,GG1,GG2,GG3,S,ZWH}) can be summarized by saying that all the results known
for the Navier-Stokes system can be extended to the Cahn-Hilliard-Navier-Stokes one, with some additional technical difficulties when, for instance,
$F$ is a singular (i.e. logarithmic) potential and/or the mobility $\kappa$ depends on $\varphi$ and vanishes at pure phases (cf. \cite{A1,B}).
However, we recall that the Cahn-Hilliard equation has a phenomenological nature (cf. \cite{CH}). Instead, a rigorous derivation from a microscopic model  yields a nonlocal equation (see \cite{GL1,GL2}). In this case the chemical potential $\mu$ has the following form
$$
\mu=a\varphi-J\ast\varphi+F^\prime(\varphi),
$$
where $*$ denotes the convolution product over $\Omega$, $J:\mathbb{R}^d \to \mathbb{R}$ is a sufficiently smooth interaction kernel
such that $J(x)=J(-x)$ and $a(x)=\displaystyle\int_\Omega k(x-y)dy$. Motivated by this fact, in \cite{CFG} we have introduced and
analyzed the following nonlocal Cahn-Hilliard-Navier-Stokes system
\begin{align}
& \varphi_t+u\cdot\nabla\varphi=\Delta\mu,\label{sy1}\\
&\mu=a\varphi-J\ast\varphi+F^\prime(\varphi), \label{sy2}\\
&u_t-\nu\Delta u+(u\cdot\nabla)u+\nabla\pi=\mu\nabla\varphi+h(t),
\label{sy3}\\
&\mbox{div}(u)=0, \label{sy4}
\end{align}
endowed with boundary and initial conditions
\begin{align}
&\frac{\partial\mu}{\partial n}=0,\quad u=0\quad\mbox{on }
\partial\Omega\times (0,T)\label{sy5}\\
&u(0)=u_0,\quad\varphi(0)=\varphi_0\quad\mbox{in }\Omega.
\label{sy6}
\end{align}
For such a problem we have proven first the existence of a global weak solution satisfying an energy inequality (equality in dimension two) for a
regular potential $F$ (see \cite{CFG}).
Then in \cite{FG1} we have established the existence of a global attractor for the generalized semiflow ($d=2$) and a trajectory attractor ($d=3$).
Similar results have recently been extended to singular potentials of logarithmic type (cf. \cite{FG2}). However, an important issue has been left open: the uniqueness
of weak solutions in dimension two. This is well known for the standard local models and it
suggests that the present model is more difficult to handle. The main reason
seems to be the poorer regularity of $\varphi$ which makes the capillarity term (i.e. the Korteweg force) $\mu\nabla\varphi$ difficult to handle (see \cite{CFG}). Here
we are not able to address this issue but we come close. More precisely, we prove the existence of a (unique) strong solution
and the regularization in finite time of any weak solution. The latter is uniform with respect to bounded set of initial data so that, as a by-product, we deduce that the global attractor we found in \cite{FG1} is smooth. More precisely, it is the union of all the bounded complete trajectories which are strong solutions to \eqref{sy1}-\eqref{sy6}. Finally, taking advantage of the regularization property, we show that any weak trajectory does converge to a unique equilibrium (cf. \cite{GG,LP,LP2} for nonlocal Cahn-Hilliard equations).
\section{Notation and known results}\setcounter{equation}{0}
We set $H:=L^2(\Omega)$ and
$V:=H^1(\Omega)$.
For every $f\in V'$ we denote by $\overline{f}$
the average of $f$ over $\Omega$, i.e., $\overline{f}:=|\Omega|^{-1}\langle f,1\rangle$.
Here $|\Omega|$ is the Lebesgue measure of $\Omega$. We assume that $\partial\Omega$
is smooth enough.

Then we introduce the Hilbert spaces
$$V_0:=\{v\in V:\overline{v}=0\},\qquad V_0':=\{f\in V':\overline{f}=0\},$$
and the operator $A:V\to V'$, $A\in\mathcal{L}(V,V')$, defined by
$$\langle Au,v\rangle:=\int_{\Omega}\nabla u\cdot\nabla v\qquad\forall u,v\in V.$$
We recall that $A$ maps $V$ onto $V_0'$ and the restriction of $A$ to $V_0$ maps $V_0$ onto $V_0'$
isomorphically. Further, we denote by $\mathcal{N}:V_0'\to V_0$ the inverse map defined by
$$A\mathcal{N}f=f,\quad\forall f\in V_0'\qquad\mbox{and}\qquad\mathcal{N}Au=u,\quad\forall u\in V_0.$$
As is well known, for every $f\in V_0'$, $\mathcal{N}f$ is the unique
solution with zero mean value of the Neumann problem
\begin{equation*}
\left\{\begin{array}{ll}
-\Delta u=f,\qquad\mbox{in }\Omega\\
\frac{\partial u}{\partial n}=0,\qquad\mbox{on }\partial\Omega.
\end{array}\right.
\end{equation*}
In addition, we have
\begin{align}
&\langle Au,\mathcal{N}f\rangle=\langle f,u\rangle,\qquad\forall u\in V,\quad\forall f\in V_0',\\
&\langle f,\mathcal{N}g\rangle=\langle g,\mathcal{N}f\rangle=\int_{\Omega}\nabla(\mathcal{N}f)
\cdot\nabla(\mathcal{N}g),\qquad\forall f,g\in V_0'.
\end{align}
We consider the canonical Hilbert spaces for the Navier-Stokes
equations with no-slip boundary condition (see, e.g., \cite{T})
$$
G_{div}:=\overline{\{u\in
C^\infty_0(\Omega)^d:\mbox{
div}(u)=0\}}^{L^2(\Omega)^d},\quad
V_{div}:=\{u\in H_0^1(\Omega)^d:\mbox{ div}(u)=0\}.
$$
We denote by $\|\cdot\|$ and $(\cdot,\cdot)$ the norm and the
scalar product on both $H$ and $G_{div}$, respectively. Instead, $V_{div}$ is
endowed with the scalar product
$$(u,v)_{V_{div}}=(\nabla u,\nabla v),\qquad\forall u,v\in V_{div}.$$

We shall also need to introduce the Stokes operator $S$
with no-slip boundary condition. More precisely, $S:D(S)\subset G_{div}\to G_{div}$ is defined as
$S:=-P\Delta$  with domain $D(S)=H^2(\Omega)^d\cap V_{div}$,
where $P:L^2(\Omega)^d\to G_{div}$ is the Leray projector. Notice that we have
$$(Su,v)=(u,v)_{V_{div}}=(\nabla u,\nabla v),\qquad\forall u\in D(S),\quad\forall v\in V_{div},$$
and $S^{-1}:G_{div}\to G_{div}$ is a self-adjoint compact operator in $G_{div}$.
Thus, according with classical results,
$S$ possesses a sequence of eigenvalues $\{\lambda_j\}$ with $0<\lambda_1\leq\lambda_2\leq\cdots$ and $\lambda_j\to\infty$,
and a family $\{w_j\}\subset D(S)$ of eigenfunctions which is orthonormal in $G_{div}$. Let us also recall
Poincar\'{e}'s inequality
$$\lambda_1\Vert u\Vert^2\leq\Vert\nabla u\Vert^2,\qquad\forall u\in V_{div}.$$

The trilinear form $b$ which appears in the weak formulation of the
Navier-Stokes equations is defined as follows
$$b(u,v,w)=\int_{\Omega}(u\cdot\nabla)v\cdot w,\qquad\forall u,v,w\in V_{div},$$
and the associated bilinear operator $B$ from $V_{div}\times V_{div}$ into $V_{div}'$ is defined by
\begin{align}
& \langle B(u,v),w\rangle:=b(u,v,w),\qquad\forall u,v,w\in V_{div}.\nonumber
\end{align}
We shall set $B(u,u):=Bu$, for all $u\in V_{div}$.
We recall that we have
\begin{equation}
b(u,w,v)=-b(u,v,w),\qquad\forall u,v,w\in V_{div},\label{ftril1}
\end{equation}
and that the following estimates hold in dimension two
\begin{align}
&|b(u,v,w)|\leq c\|u\|^{1/2}\|\nabla u\|^{1/2}\|\nabla v\|\|w\|^{1/2}\|\nabla w\|^{1/2},\quad\forall u,v,w\in V_{div},\label{ftril2}\\
&|b(u,v,w)|\leq c\|u\|^{1/2}\|\nabla u\|^{1/2}\|\nabla v\|^{1/2}\|Sv\|^{1/2}\|w\|,\quad\forall u\in V_{div},\; v\in D(S),\; w\in G_{div}.\label{ftril3}
\end{align}

If $X$ is a Banach space and $\tau\in\mathbb{R}$, we shall denote by $L^p_{tb}(\tau,\infty;X)$, $1\leq p<\infty$,
the space of functions $f\in L^p_{loc}([\tau,\infty);X)$ that are translation bounded in $L^p_{loc}([\tau,\infty);X)$,
i.e. such that
\begin{align}
&\Vert f\Vert_{L^p_{tb}(\tau,\infty;X)}^p:=\sup_{t\geq\tau}\int_t^{t+1}\Vert f(s)\Vert_X^p ds<\infty.
\end{align}

We shall use the following lemma. Its simple proof is given below for the reader's convenience.
\begin{lem}
Let $f\in L^{p_1}(\tau,\infty;X)$ with $f_t\in
L^{p_2}_{tb}(\tau,\infty;X)$, where $1\leq p_1<\infty$,
$1<p_2\leq \infty$, $\tau\in\mathbb{R}$ and $X$ is a reflexive
Banach space. Then $f(t)\to 0$ in $X$ as $t\to\infty$.
\label{prellem}
\end{lem}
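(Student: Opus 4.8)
The plan is to prove that $f(t)\to 0$ in $X$ as $t\to\infty$, given that $f\in L^{p_1}(\tau,\infty;X)$ and $f_t\in L^{p_2}_{tb}(\tau,\infty;X)$ with $1\le p_1<\infty$ and $1<p_2\le\infty$. The key mechanism is that integrability in time forces $f$ to be small on most of the half-line, while the translation-bounded derivative prevents $f$ from oscillating too wildly, so it cannot escape back away from zero on short time windows.

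Let me sketch how I would prove it. First I would argue by contradiction: suppose $f(t)\not\to 0$, so there exist $\varepsilon>0$ and a sequence $t_n\to\infty$ with $\|f(t_n)\|_X\ge\varepsilon$. The heart of the matter is a local continuity-type estimate: for $s$ in a small interval around $t_n$, I control $\|f(t_n)-f(s)\|_X$ using the fundamental theorem of calculus in the Bochner sense,
\begin{equation*}
f(t_n)-f(s)=-\int_s^{t_n}f_t(\sigma)\,d\sigma,
\end{equation*}
and then bound the right-hand side by Hölder's inequality in time. Writing $p_2'$ for the conjugate exponent of $p_2$ (here $p_2>1$ guarantees $p_2'<\infty$), on an interval of length $\delta\le 1$ I would get $\|f(t_n)-f(s)\|_X\le \delta^{1/p_2'}\big(\int_{t_n}^{t_n+1}\|f_t\|_X^{p_2}\big)^{1/p_2}\le \delta^{1/p_2'}\,M$, where $M:=\|f_t\|_{L^{p_2}_{tb}(\tau,\infty;X)}$ is finite. (The case $p_2=\infty$ is even simpler, giving a bound $\delta\, M$.)

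Choosing $\delta$ small enough that $\delta^{1/p_2'}M\le\varepsilon/2$, I conclude that $\|f(s)\|_X\ge\varepsilon/2$ for all $s\in[t_n,t_n+\delta]$. Passing if necessary to a subsequence of the $t_n$ so that these intervals are pairwise disjoint (possible since $t_n\to\infty$), I then estimate
\begin{equation*}
\int_\tau^\infty\|f(s)\|_X^{p_1}\,ds\ge\sum_n\int_{t_n}^{t_n+\delta}\|f(s)\|_X^{p_1}\,ds\ge\sum_n\delta\Big(\frac{\varepsilon}{2}\Big)^{p_1}=+\infty,
\end{equation*}
which contradicts $f\in L^{p_1}(\tau,\infty;X)$. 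Hence $f(t)\to 0$ in $X$. The only point requiring a little care — and the step I would watch most closely — is justifying the Bochner fundamental theorem of calculus and the Hölder estimate in the reflexive space $X$, namely that $f$ agrees almost everywhere with an absolutely continuous (indeed $X$-valued Hölder continuous) representative because its distributional derivative $f_t$ lies in $L^{p_2}_{loc}$; the reflexivity of $X$ is exactly what is needed to guarantee that such Bochner-integrable derivatives integrate back to $f$. Once that representative is fixed, the continuity estimate holds pointwise and the contradiction argument goes through cleanly.
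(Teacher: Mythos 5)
Your proof is correct and uses essentially the same mechanism as the paper: a contradiction argument combining H\"older's inequality on $f_t$ over short time intervals (giving local equicontinuity with constant $\|f_t\|_{L^{p_2}_{tb}}$) with the divergence of $\int\|f\|_X^{p_1}$ that would otherwise follow. The only cosmetic difference is the order of the steps: you show $f$ stays large on fixed-length intervals $[t_n,t_n+\delta]$ and contradict integrability, while the paper first uses integrability to find nearby points $t_n'\in[t_n,t_n+1/n]$ where $\|f(t_n')\|_X\le\sigma/2$ and then contradicts the H\"older bound $\|f(t_n)-f(t_n')\|_X\le \|f_t\|_{L^{p_2}_{tb}}n^{-1/p_2'}\to 0$.
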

\begin{proof}
We argue by contradiction. Suppose there exist a sequence $\{t_n\}$
with $t_n\to\infty$ and a constant $\sigma>0$ such that $\Vert
f(t_n)\Vert_X\geq\sigma$, for all $n$. Set $\tau_n:=t_n+1/n$.
Since $f\in L^{p_1}(\tau,\infty;X)$ with $1\leq p_1<\infty$,
then, by possibly extracting a subsequence, for every $n$ there
exists $t_n'\in[t_n,\tau_n]$ such that $\Vert
f(t_n')\Vert_X\leq\sigma/2$. We therefore get a contradiction,
since, denoting by $p_2'\in[1,\infty)$ the conjugate of $p_2$,
\begin{align}
&0<\frac{\sigma}{2}\leq\Vert
f(t_n')-f(t_n)\Vert_X\leq\int_{t_n}^{t_n'}\Vert f_t(s)\Vert_X ds
\leq\Vert
f_t\Vert_{L^{p_2}_{tb}(\tau,\infty;X)}\frac{1}{n^{p_2'}}\to
0.\nonumber
\end{align}
\end{proof}

We also report the uniform Gronwall lemma which will be useful in the sequel
(see, e.g., \cite{TInf}).
\begin{lem}
\label{unifGronw}
Let $\Phi$ be an absolutely continuous nonnegative function on $[\tau,\infty)$ and $\omega_1,\omega_2$ two nonnegative
locally summable functions on $[\tau,\infty)$ satisfying
\begin{align}
&\frac{d}{dt}\Phi(t)\leq\omega_1(t)\Phi(t)+\omega_2(t),\qquad\mbox{for a.e. }t\in[\tau,\infty),
\end{align}
and such that
\begin{align}
&\int_t^{t+1}\omega_i(s)ds\leq a_i,\quad i=1,2,\qquad\int_t^{t+1}\Phi(s)ds\leq a_3,
\end{align}
for all $t\geq\tau$, where $a_1,a_2,a_3$ are some nonnegative constants. Then
\begin{align}
&\Phi(t+1)\leq(a_2+a_3)e^{a_1},\qquad\forall t\geq\tau.
\end{align}
\end{lem}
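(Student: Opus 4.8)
The plan is to reduce the statement to the classical differential (pointwise) Gronwall inequality applied on subintervals of length at most one, and then to circumvent the absence of any pointwise bound on $\Phi$ by averaging over the left endpoint of the integration interval. The whole difficulty is that we are given only the \emph{integral} bound $\int_t^{t+1}\Phi\le a_3$, not a value $\Phi(\tau)$ from which a naive application of Gronwall could start.

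First I would fix $t\ge\tau$ and, for an arbitrary $r\in[t,t+1]$, apply the usual Gronwall argument to the differential inequality $\frac{d}{dt}\Phi\le\omega_1\Phi+\omega_2$ on $[r,t+1]$. Multiplying by the integrating factor $\exp\big(-\int_r^s\omega_1\big)$, integrating from $r$ to $t+1$, and solving for $\Phi(t+1)$ gives
$$\Phi(t+1)\le\Phi(r)\exp\Big(\int_r^{t+1}\omega_1(\sigma)\,d\sigma\Big)+\int_r^{t+1}\omega_2(s)\exp\Big(\int_s^{t+1}\omega_1(\sigma)\,d\sigma\Big)\,ds.$$
This step uses only the absolute continuity of $\Phi$ and the local summability of $\omega_1,\omega_2$.

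Second, since $r,s\in[t,t+1]$, every occurrence of $\int_{\cdot}^{t+1}\omega_1$ is dominated by $\int_t^{t+1}\omega_1\le a_1$, so both exponentials are at most $e^{a_1}$; combined with $\int_r^{t+1}\omega_2\le\int_t^{t+1}\omega_2\le a_2$ this yields the uniform estimate
$$\Phi(t+1)\le e^{a_1}\big(\Phi(r)+a_2\big),\qquad\forall\,r\in[t,t+1].$$
Finally I would integrate this inequality in $r$ over $[t,t+1]$. The left-hand side does not depend on $r$ and is therefore unchanged, while $\int_t^{t+1}\Phi(r)\,dr\le a_3$ by hypothesis, producing $\Phi(t+1)\le e^{a_1}(a_2+a_3)$, which is the claim.

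The crux—and the one genuinely clever point—is precisely this averaging step: it replaces the unknown pointwise value $\Phi(r)$, which no hypothesis controls, by the available mean bound $a_3$, and it costs nothing because the target quantity $\Phi(t+1)$ is constant in $r$. Beyond that there is no serious analytic obstacle; one only needs to verify that the two exponential factors are bounded \emph{uniformly in} $t$, which holds because the interval $[t,t+1]$ has fixed length and the hypotheses on $\omega_1,\omega_2,\Phi$ are themselves translation-uniform.
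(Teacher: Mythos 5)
Your proof is correct and is precisely the classical argument for the uniform Gronwall lemma as given in the cited reference (Temam): Gronwall on $[r,t+1]$, uniform bounds on the exponential factors via the nonnegativity of $\omega_1$, and then averaging over the left endpoint $r\in[t,t+1]$ to exploit the integral bound on $\Phi$. The paper itself does not reprove the lemma but simply quotes it, so there is nothing to compare beyond noting that your argument is the standard one and is complete.
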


We now summarize the main results of \cite{CFG}. They are concerned with the existence of dissipative weak solutions
and the validity of the energy identity and of a dissipative estimate in dimension two.

The assumptions on $J$ and $F$ are listed below

\begin{description}
 \item[(H1)] $J\in W^{1,1}(\mathbb{R}^d),\quad
     J(x)=J(-x),\quad a\geq 0\quad\mbox{a.e. in } \Omega$.
 \item[(H2)] $F\in C^{2,1}_{loc}(\mathbb{R})$ and there exists $c_0>0$
     such that
             $$F^{\prime\prime}(s)+a(x)\geq c_0,\qquad\forall s\in\mathbb{R},\quad\mbox{a.e. }x\in\Omega.$$
 \item[(H3)] $F\in C^2(\mathbb{R})$ and there exist $c_1>0$,
    $c_2>0$ and $q>0$ such that
            $$F^{\prime\prime}(s)+a(x)\geq c_1\vert s\vert^{2q} - c_2,
            \qquad\forall s\in\mathbb{R},\quad\mbox{a.e. }x\in\Omega.$$
 \item[(H4)] There exist $c_3>0$, $c_4\geq0$ and $r\in(1,2]$
     such that
             $$|F^\prime(s)|^r\leq c_3|F(s)|+c_4,\qquad
             \forall s\in\mathbb{R}.$$
\end{description}

\begin{oss}
{\upshape
Assumption $J\in W^{1,1}(\mathbb{R}^d)$ can be weakened. Indeed, it can be replaced by
$J\in W^{1,1}(B_{\delta})$, where $B_{\delta}:=\{z\in\mathbb{R}^d:|z|<\delta\}$ with $\delta:=\mbox{diam}(\Omega)$, or also
by (see, e.g., \cite{BH1})
$$\sup_{x\in\Omega}\int_{\Omega}\big(|J(x-y)|+|\nabla J(x-y)|\big)dy<\infty.$$
}
\end{oss}

The above assumptions allow to prove the following result (see \cite{CFG})

\begin{thm}
\label{thm} Let $h\in L^2_{loc}([0,\infty);V^\prime_{div})$,
$u_0\in G_{div}$, $\varphi_0\in H$ such that $F(\varphi_0)\in
L^1(\Omega)$ and suppose that (H1)-(H4) are satisfied. Then, for
every given $T>0$, there exists a weak solution $[u,\varphi]$ to \eqref{sy1}--\eqref{sy6}
such that
\begin{align}
&u\in L^{\infty}(0,T;G_{div})\cap L^2(0,T;V_{div}),\quad\varphi \in L^\infty(0,T;L^{2+2q}(\Omega))\cap L^2(0,T;V),\\
&u_t\in L^{4/3}(0,T;V_{div}'),\quad\varphi_t\in L^{4/3}(0,T;V'),\qquad d=3,\\
&u_t\in L^2(0,T;V_{div}'),\quad d=2,\\
&\varphi_t\in L^2(0,T;V'),\quad d=2\quad\mbox{ or }
 \quad d=3 \mbox{ and } q\geq 1/2,
\end{align}
and satisfying the energy inequality
\begin{equation}
\mathcal{E}(u(t),\varphi(t)) +\int_0^t\Big(\nu\|\nabla u \|^2+\|\nabla\mu \|^2\Big)d\tau
\leq\mathcal{E}(u_0,\varphi_0)+\int_0^t\langle h(\tau),u \rangle d\tau,\label{ei}
\end{equation}
for every $t>0$,
where we have set
$$\mathcal{E}(u(t),\varphi(t))=\frac{1}{2}\|u(t)\|^2+\frac{1}{4}
\int_{\Omega}\int_{\Omega}J(x-y)(\varphi(x,t)-\varphi(y,t))^2
dxdy+\int_{\Omega}F(\varphi(t)).$$
If $d=2$, then any weak solution satisfies the energy identity
\begin{equation}
\frac{d}{dt}\mathcal{E}(u,\varphi)
+\nu\|\nabla u \|^2+\|\nabla\mu\|^2=\langle h(t),u\rangle,
\label{eniden}
\end{equation}
In particular we have $u\in C([0,\infty);G_{div})$, $\varphi\in C([0,\infty);H)$ and $\int_\Omega F(\varphi)\in C([0,\infty))$.
Furthermore, if $d=2$ and $h\in L^2_{tb}(0,\infty;V_{div}')$,
then any weak solution satisfies
also the dissipative estimate
\begin{equation}
\mathcal{E}(u(t),\varphi(t))\leq \mathcal{E}(u_0,\varphi_0)e^{-kt}+ F(m_0)|\Omega| + K,
\qquad\forall t\geq 0,\label{dissest}
\end{equation}
where $m_0=(\varphi_0,1)$ and $k$, $K$ are two positive constants
which are independent of the initial data, with $K$ depending on
$\Omega$, $\nu$, $J$, $F$ and $\|h\|_{L^2_{tb}(0,\infty;V_{div}')}$.

\end{thm}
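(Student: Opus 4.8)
The plan is to construct the solution by a Faedo--Galerkin scheme and to read off all the stated properties from a single master estimate, the energy identity. First I would introduce finite-dimensional approximations: for the velocity I project the momentum equation onto $\mathrm{span}\{w_1,\dots,w_n\}$ built from the Stokes eigenfunctions, and for the order parameter I project onto the first $n$ eigenfunctions of the Neumann Laplacian. Testing the approximate $\varphi$-equation by $\mu_n$, the approximate momentum equation by $u_n$, and adding, the two coupling contributions $(u_n\cdot\nabla\varphi_n,\mu_n)$ and $(\mu_n\nabla\varphi_n,u_n)$ cancel while $b(u_n,u_n,u_n)=0$ by \eqref{ftril1}; a direct computation using $J(x)=J(-x)$ and $a(x)=\int_\Omega J(x-y)\,dy$ shows that $(\varphi_{n,t},\mu_n)$ is the time derivative of the nonlocal and potential parts of $\mathcal{E}$. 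This produces the discrete analogue of \eqref{eniden}, and after absorbing $\langle h,u_n\rangle$ by Young's inequality it yields uniform bounds for $u_n$ in $L^\infty(0,T;G_{div})\cap L^2(0,T;V_{div})$, for $\nabla\mu_n$ in $L^2(0,T;H)$, and, via (H3), for $\varphi_n$ in $L^\infty(0,T;L^{2+2q}(\Omega))$.

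The second step is to upgrade these into the claimed regularity. Differentiating $\mu_n=a\varphi_n-J*\varphi_n+F'(\varphi_n)$ gives $(a+F''(\varphi_n))\nabla\varphi_n=\nabla\mu_n-\varphi_n\nabla a+(\nabla J)*\varphi_n$, and assumption (H2), namely $a+F''\geq c_0>0$, together with Young's inequality for the convolutions (legitimate since $J\in W^{1,1}$), gives $\|\nabla\varphi_n\|\leq C(\|\nabla\mu_n\|+\|\varphi_n\|)$, hence $\varphi_n$ bounded in $L^2(0,T;V)$. Reading off $\varphi_{n,t}=\Delta\mu_n-u_n\cdot\nabla\varphi_n$ and $u_{n,t}$ from the two equations, and estimating the convective and Korteweg terms by \eqref{ftril2}--\eqref{ftril3}, I obtain the stated bounds on $u_{n,t}$ and $\varphi_{n,t}$ (the $L^{4/3}$ exponent in $d=3$ is forced by the borderline integrability of $u_n\cdot\nabla\varphi_n$, which improves to $L^2$ in $d=2$ and when $q\geq 1/2$). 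An Aubin--Lions argument then yields strong convergence of $u_n$ in $L^2(0,T;G_{div})$ and of $\varphi_n$ in $L^2(0,T;H)$ along a subsequence, enough to identify $F'(\varphi_n)\to F'(\varphi)$ and to pass to the limit in $b(u_n,\cdot,\cdot)$, in $u_n\cdot\nabla\varphi_n$ and in $\mu_n\nabla\varphi_n$, producing a weak solution; the energy inequality \eqref{ei} follows from weak lower semicontinuity of the dissipation terms in the limit of the discrete identity.

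For the two-dimensional refinements I would exploit that the limit regularity there is strong enough to make $\mu$ an admissible test function: since $\varphi_t\in L^2(0,T;V')$ and $\mu\in L^2(0,T;V)$, the duality $\langle\varphi_t,\mu\rangle$ is well defined, the formal computation above becomes rigorous, and \eqref{eniden} holds; the continuity statements $u\in C([0,\infty);G_{div})$ and $\varphi\in C([0,\infty);H)$ then follow by upgrading the weak continuity supplied by the time-derivative bounds through continuity of $t\mapsto\mathcal{E}$. Finally, to derive \eqref{dissest}, I start from \eqref{eniden}, bound $\langle h,u\rangle\leq\frac{\nu}{2}\|\nabla u\|^2+\frac{1}{2\nu}\|h\|_{V_{div}'}^2$, and must convert the remaining dissipation $\frac{\nu}{2}\|\nabla u\|^2+\|\nabla\mu\|^2$ into a multiple of $\mathcal{E}$. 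Poincar\'e's inequality handles the kinetic part immediately; the delicate part is the potential contribution, where the conservation of the spatial mean $\overline{\varphi(t)}=\overline{\varphi_0}$ (obtained by integrating \eqref{sy1} and using \eqref{sy5}, which also gives $\overline{\mu}=|\Omega|^{-1}\int_\Omega F'(\varphi)$) must be combined with (H3)--(H4) to estimate $\int_\Omega F(\varphi)$ by $\|\nabla\mu\|^2$ up to the irreducible constant $F(m_0)|\Omega|$. This is the main obstacle, and it is precisely the mechanism that yields a differential inequality $\frac{d}{dt}\mathcal{E}+k\mathcal{E}\leq C(1+\|h\|_{V_{div}'}^2)$, after which \eqref{dissest} follows by a routine Gronwall argument together with the translation-boundedness of $h$.
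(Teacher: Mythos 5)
The theorem you are proving is not actually proved in this paper: it is stated as a summary of the main results of \cite{CFG}, with an explicit pointer to that reference, so there is no in-paper argument to compare yours against line by line. That said, your Faedo--Galerkin scheme, the cancellation of the two coupling terms after testing by $\mu_n$ and $u_n$, the identification of $(\varphi_{n,t},\mu_n)$ with the time derivative of the nonlocal-plus-potential part of $\mathcal{E}$, the use of (H2) to bound $\|\nabla\varphi\|$ by $\|\nabla\mu\|+\|\varphi\|$, and the Aubin--Lions passage to the limit are exactly the route taken in \cite{CFG} (and the one this paper invokes later when it says its formal estimates ``can be made rigorous by means of a Faedo--Galerkin approximation technique''). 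One technical point to make explicit: in the discrete system $\mu_n$ must be defined as the projection of $a\varphi_n-J\ast\varphi_n+F'(\varphi_n)$ onto the finite-dimensional space, otherwise it is not an admissible test function; the energy computation still closes because $\varphi_{n,t}$ lies in that space.

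The one place where your argument is genuinely incomplete rather than merely compressed is the dissipative estimate \eqref{dissest}. You correctly isolate the obstacle --- converting $\int_\Omega F(\varphi)$ into a quantity dominated by $\|\nabla\mu\|^2$ up to $F(m_0)|\Omega|$ plus a constant --- but you only assert that the mechanism works. The actual mechanism is to test \eqref{sy2} against $\varphi-m_0$ (using conservation of the mean, $\overline{\varphi}(t)=m_0/|\Omega|$), write $(\mu,\varphi-m_0)=(\mu-\overline{\mu},\varphi-m_0)\leq c_p\|\nabla\mu\|\,\|\varphi-m_0\|$ by Poincar\'e--Wirtinger, and bound the left-hand side from below by $\int_\Omega F(\varphi)-F(m_0)|\Omega|$ minus controllable terms, using the convexity of $s\mapsto F(s)+\tfrac{1}{2}a(x)s^2$ furnished by (H2) together with the growth link between $F'$ and $F$ in (H4); (H3) then absorbs the remaining quadratic terms. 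Without this step the differential inequality $\frac{d}{dt}\mathcal{E}+k\mathcal{E}\leq C(1+\|h\|_{V_{div}'}^2)$ does not follow, so you should supply it (or cite \cite{CFG}, where it is carried out) before claiming \eqref{dissest}.
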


\begin{oss}
{\upshape All the previous results hold for a viscosity $\nu$ depending on $\phi$ which is sufficiently smooth and bounded from above and from below (see \cite{CFG}, cf. also \cite{FG1,FG2}). Here we assume $\nu$ to be constant just to avoid further technicalities in the sequel.}
\end{oss}


\section{Strong solutions in two dimensions}\setcounter{equation}{0}
In this section we state and prove our main result, namely the existence of a (global) strong solution to \eqref{sy1}--\eqref{sy6} and its uniqueness. More precisely, we have

\begin{thm}\label{main}
Let $h\in L^2_{loc}([0,\infty);G_{div})$,
$u_0\in V_{div}$, $\varphi_0\in V\cap L^{\infty}(\Omega)$
and suppose that (H1)-(H4) are satisfied. Then, for
every given $T>0$, there exists a weak solution $[u,\varphi]$
such that
\begin{align}
&u\in L^{\infty}(0,T;V_{div})\cap L^2(0,T;H^2(\Omega)^2),\quad\varphi \in L^\infty(\Omega\times(0,T))\cap L^\infty(0,T;V),\label{reg1}\\
&u_t\in L^2(0,T;G_{div}),\quad\varphi_t\in L^2(0,T;H).\label{reg2}
\end{align}
Furthermore, suppose in addition that $F\in C^3(\mathbb{R})$ and that $\varphi_0\in H^2(\Omega)$. Then, system \eqref{sy1}--\eqref{sy4}
admits a unique strong solution on $[0,T]$ satisfying \eqref{reg1}, \eqref{reg2} and also
\begin{align}
&\varphi\in L^\infty(0,T;W^{1,p}(\Omega)),\quad 2\leq p<\infty,\label{reg3}\\
&\varphi_t\in L^\infty(0,T;H)\cap L^2(0,T;V).\label{reg4}
\end{align}
If $J\in W^{2,1}(\mathbb{R}^2)$, we have in addition
\begin{align}
&\varphi\in L^\infty(0,T;H^2(\Omega)).\label{reg10}
\end{align}
Moreover, let $[u_{0i},\varphi_{0i},h_i]\in V_{div}\times H^2(\Omega)\times L^2_{loc}([0,\infty);G_{div})$, $i=1,2$,
be two sets of data and denote by $[u_i,\varphi_i]$ the corresponding solutions.
Then, there exists a positive constant $\Lambda$ which is a continuous and increasing function
of the norms
of the data of two solutions and which also depends on $T$, $F$, $J$, $\Omega$, $\nu$, such that the following continuous dependence estimate holds
\begin{align}
&\Vert u_2(t)-u_1(t)\Vert^2+\Vert\varphi_2(t)-\varphi_1(t)\Vert_{V_0'}^2\nonumber\\
&+\int_0^t\Vert\nabla u_2(\tau)-\nabla u_1(\tau)\Vert^2 d\tau
+\int_0^t\Vert\varphi_2(\tau)-\varphi_1(\tau)\Vert^2 d\tau\nonumber\\
&\leq\Lambda\Big(\Vert u_{02}-u_{01}\Vert^2
+\Vert\varphi_{02}-\varphi_{01}\Vert_{V_0'}^2+\Vert h_2-h_1\Vert_{L^2(0,T;G_{div})}^2\Big),
\label{contdipest}
\end{align}
for every $t\in[0,T]$.

\end{thm}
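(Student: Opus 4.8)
The plan is to construct the solution by a Faedo--Galerkin scheme, to derive a hierarchy of a priori estimates that are justified on the approximate problem and preserved in the limit, and then to obtain uniqueness and \eqref{contdipest} from an energy estimate on the difference of two solutions. The recurring tool will be the structural identity
\[
\nabla\mu=(a+F''(\varphi))\nabla\varphi+\varphi\nabla a-\nabla J\ast\varphi,
\]
combined with the coercivity $a+F''(\varphi)\geq c_0>0$ from (H2). Since $\nabla a$ is bounded and $u\mapsto\nabla J\ast u$ raises no derivatives (Young's inequality, $J\in W^{1,1}$), this identity lets one trade derivatives of $\mu$ for derivatives of $\varphi$. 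The first step is the $L^\infty$ bound $\varphi\in L^\infty(\Omega\times(0,T))$. Writing the phase equation in divergence form $\varphi_t+u\cdot\nabla\varphi=\mbox{div}[(a+F''(\varphi))\nabla\varphi]+\mbox{div}[\varphi\nabla a-\nabla J\ast\varphi]$ exhibits it as a uniformly parabolic equation with conservative drift; testing with the truncations $(\varphi-k)^+,(\varphi+k)^-$, for which the convective term drops out because $\mbox{div}\,u=0$ and $u|_{\partial\Omega}=0$, and running a Stampacchia iteration propagates $\|\varphi_0\|_{L^\infty(\Omega)}$ to all $t\in[0,T]$. This bound is what will make the capillary term tractable.

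Next I would prove $u\in L^\infty(0,T;V_{div})\cap L^2(0,T;H^2(\Omega)^2)$ by testing \eqref{sy3} with $Su$, obtaining
\[
\tfrac12\tfrac{d}{dt}\|\nabla u\|^2+\nu\|Su\|^2=-b(u,u,Su)+(\mu\nabla\varphi,Su)+(h,Su).
\]
The trilinear term is controlled by \eqref{ftril3} and Young's inequality, absorbing a fraction of $\nu\|Su\|^2$ and leaving a factor $c\|u\|^2\|\nabla u\|^2$ which is integrable in time by the energy identity \eqref{eniden}. For the Korteweg force I would exploit that $Su\in G_{div}$ is orthogonal to gradients and rewrite $\mu\nabla\varphi=\nabla[F(\varphi)+\tfrac12a\varphi^2-(J\ast\varphi)\varphi]-\tfrac12\varphi^2\nabla a+\varphi(\nabla J\ast\varphi)$, whence $(\mu\nabla\varphi,Su)=(-\tfrac12\varphi^2\nabla a+\varphi(\nabla J\ast\varphi),Su)$, an integrand bounded in $L^2$ by a constant depending only on $\|\varphi\|_{L^\infty}$. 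Since $u_0\in V_{div}$, Gronwall's lemma then closes the bound for $u$ in \eqref{reg1}.

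The regularity of $\varphi$ is the heart of the matter. Testing the phase equation with $\varphi_t$ and using the identity above gives
\[
\|\varphi_t\|^2+\tfrac12\tfrac{d}{dt}\int_\Omega(a+F''(\varphi))|\nabla\varphi|^2=\tfrac12\int_\Omega F'''(\varphi)\varphi_t|\nabla\varphi|^2-(\varphi\nabla a-\nabla J\ast\varphi,\nabla\varphi_t)-(u\cdot\nabla\varphi,\varphi_t),
\]
where $F\in C^3$ enters through $F'''$, bounded because $\varphi\in L^\infty$. The terms containing $\nabla\varphi_t$ are reabsorbed by writing them as time derivatives plus remainders estimated with $\|\nabla J\ast\varphi_t\|\leq\|\nabla J\|_{L^1}\|\varphi_t\|$, so that only $J\in W^{1,1}$ is needed. \textbf{The genuine obstacle is the cubic term} $\int_\Omega F'''(\varphi)\varphi_t|\nabla\varphi|^2$, which forces control of $\nabla\varphi$ in $L^4$. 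I would overcome it by running this estimate simultaneously with a $W^{1,p}$ bound: from $\nabla\varphi=(a+F''(\varphi))^{-1}(\nabla\mu-\varphi\nabla a+\nabla J\ast\varphi)$ and elliptic regularity for the Neumann problem $\Delta\mu=\varphi_t+u\cdot\nabla\varphi$ (which yields $\mu\in L^2(0,T;H^2)$ once $\varphi_t\in L^2(0,T;H)$), one bounds $\|\nabla\varphi\|_{L^p}$ by $\|\varphi_t\|$ plus lower-order terms; a Gronwall argument on the combined functional, with $\varphi_0\in H^2\hookrightarrow W^{1,p}$ furnishing the initial value, produces \eqref{reg3}--\eqref{reg4} and, feeding back, the bounds $\varphi\in L^\infty(0,T;V)$, $\varphi_t\in L^2(0,T;H)$ of \eqref{reg1}--\eqref{reg2}. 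Differentiating the same identity once more then gives \eqref{reg10}, the only new ingredients being $D^2a$ and $D^2J\ast\varphi$, which is exactly why $J\in W^{2,1}(\mathbb{R}^2)$ is imposed there.

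Finally I would address uniqueness and \eqref{contdipest}. Setting $\tilde u=u_2-u_1$, $\tilde\varphi=\varphi_2-\varphi_1$, $\tilde h=h_2-h_1$, one checks that $\overline{\tilde\varphi}$ is conserved, so I work with the mean-free part and test the phase difference with $\mathcal{N}(\tilde\varphi-\overline{\tilde\varphi})\in V_0$ and the velocity difference with $\tilde u$. The monotonicity supplied by $a+F''\geq c_0$ produces the coercive term $c_0\int_0^t\|\tilde\varphi\|^2$ on the left of \eqref{contdipest}, the difference of the trilinear forms is handled through \eqref{ftril2} and the antisymmetry \eqref{ftril1}, and the remaining lower-order contributions are absorbed. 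The main difficulty here is again the capillary coupling $(\mu_2\nabla\varphi_2-\mu_1\nabla\varphi_1,\tilde u)$, which must be bounded in terms of $\|\tilde\varphi\|_{V_0'}$, $\|\tilde u\|$ and $\|\nabla\tilde u\|$; this is precisely where the strong bounds on the two solutions (notably $\nabla\varphi_i\in L^\infty(0,T;L^p)$ and $\varphi_i\in L^\infty$) are indispensable, reflecting the fact that the low regularity of this term is what still obstructs uniqueness of plain weak solutions. Collecting the estimates and invoking Gronwall's lemma yields \eqref{contdipest}, and uniqueness follows by taking identical data.
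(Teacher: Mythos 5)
Your treatment of the $L^\infty$ bound on $\varphi$, of the velocity estimate via $Su$ (the observation that $(\nabla g,Su)=0$ neatly replaces the paper's appeal to maximal regularity with $\mu\nabla\varphi\in L^2(0,T;H)$), and of the continuous dependence estimate are all sound and essentially parallel to the paper. The genuine gap is in the regularity of $\varphi$. You test \eqref{sy1} with $\varphi_t$, which generates the cubic term $\int_\Omega F'''(\varphi)\varphi_t|\nabla\varphi|^2$, and you propose to close the estimate by bounding $\|\nabla\varphi\|_{L^4}$ through $\|\nabla\mu\|_{L^4}\lesssim\|\nabla\mu\|^{1/2}\|\mu\|_{H^2}^{1/2}$ and the elliptic equation $-\Delta\mu=-\varphi_t-u\cdot\nabla\varphi$. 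But this feeds $\|\varphi_t\|$ back into the bound: one gets $\|\nabla\varphi\|_{L^4}^2\lesssim\|\nabla\mu\|\big(\|\varphi_t\|+\dots\big)+C$, hence the cubic term is of size $C\|\nabla\mu\|\,\|\varphi_t\|^2$ with $C$ not small, i.e. it is comparable to the \emph{dissipation} $\|\varphi_t\|^2$ on the left rather than to the quantity you are running Gronwall on ($\sim\|\nabla\varphi\|^2$). The resulting inequality has the form $\frac{d}{dt}Y+\big(1-C(1+Y^{1/2})\big)\|\varphi_t\|^2\leq\omega Y+\dots$ and does not close. The paper avoids this entirely by testing \eqref{sy1} with $\mu_t$ instead of $\varphi_t$: then the coercive term is $\int_\Omega(a+F''(\varphi))\varphi_t^2\geq c_0\|\varphi_t\|^2$, the energy term is $\frac12\frac{d}{dt}\|\nabla\mu\|^2$, and \emph{no} $F'''$ term appears at this stage, so \eqref{reg1}--\eqref{reg2} follow under (H2) alone -- which matters, because the first part of Theorem \ref{main} is claimed without $F\in C^3$ and without $\varphi_0\in H^2$, both of which your route would require.

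A second, related omission: your single estimate can at best give $\varphi_t\in L^2(0,T;H)$, but \eqref{reg4} asserts $\varphi_t\in L^\infty(0,T;H)\cap L^2(0,T;V)$. This requires differentiating \eqref{sy1} in time and testing again with $\mu_t$, which is where $F'''$ legitimately enters through $\int_\Omega F'''(\varphi)\varphi_t^3$; that term produces, after Gagliardo--Nirenberg and \eqref{est5}, \eqref{est13}, a contribution $\overline{c}\|\varphi_t\|^4$ that cannot be handled by plain Gronwall and is dealt with in the paper by dividing \eqref{est11} by $1+\int_\Omega(a+F''(\varphi))\varphi_t^2$ and integrating the logarithm, using the already established $\varphi_t\in L^2(0,T;H)$. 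Neither the time differentiation nor a device for the quartic term appears in your plan, so \eqref{reg4} (and with it $\nabla\varphi\in L^\infty(0,T;L^p)$, which your own uniqueness argument needs) is not actually reached.
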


\begin{oss}
\label{timereg}
{\upshape
The regularity properties \eqref{reg1}--\eqref{reg10} imply that
$$u\in C([0,\infty);V_{div}),\quad\varphi\in C([0,T];V)\cap C_w([0,T];H^2(\Omega))$$
Actually, we have also $\varphi\in C([0,T];H^\delta(\Omega))$ for every $\delta\in[0,2)$.
Recall that the time continuity of the velocity field into $V_{div}$ is a consequence
of the fact that $u\in C_w([0,\infty);V_{div})$ and of the following differential identity
\begin{align}
&\frac{1}{2}\frac{d}{dt}\Vert\nabla u\Vert^2+\nu\Vert Su\Vert^2+(Bu,Su)=(\mu\nabla\varphi,Su)+(h,Su),
\label{est47}
\end{align}
which is deduced by testing equation \eqref{sy3} by $Su$.
}
\end{oss}

\begin{oss}
\label{eventbd}
{\upshape
If the condition $\varphi_0\in L^{\infty}(\Omega)$ in the first part of Theorem \ref{main} is removed, a boundedness estimate
for the order parameter $\varphi$ can still be recovered. In particular, it can be proved (see \cite[Lemma 2.10]{GG})
that for every $t_0>0$ there
exists a constant $\overline{C}_{m,t_0}>0$, where $m$ is such that $|\overline{\varphi}_0|\leq m$, such that
$$\sup_{t\geq 2t_0}\Vert\varphi(t)\Vert_{L^\infty(\Omega)}\leq \overline{C}_{m,t_0}.$$
Moreover, \eqref{reg1}--\eqref{reg4} still hold provided the time interval $(0,T)$ is replaced by $(2t_0,T)$, for every $T>2t_0$.
}
\end{oss}

\begin{oss}
{\upshape
In Theorem \ref{main} condition $J\in W^{2,1}(\mathbb{R}^2)$ is actually needed to ensure the regularity property
$\varphi\in L^\infty(0,T;H^2(\Omega))$ only.}
\end{oss}

\begin{proof}

We shall carry out the proof by providing some formal regularization estimates. The argument can be made rigorous by
means, e.g., of a Faedo-Galerkin approximation technique (see \cite{CFG} for details).

We first observe that the property $\varphi\in L^\infty(\Omega\times(0,T))$ can be obtained
by exploiting the same argument used in \cite[Theorem 2.1]{BH1}.
Indeed, by multiplying \eqref{sy1} by $\varphi|\varphi|^{p-1}$ and integrating on $\Omega$ the resulting equation,
the contribution of the convective term vanishes due to the incompressibility condition \eqref{sy4} and
the proof of \cite[Theorem 2.1]{BH1} entails
\begin{align}
\sup_{t\in(0,T)}\Vert\varphi(t)\Vert_{L^\infty(\Omega)}\leq \overline{C},
\label{phibd}
\end{align}
where the constant $\overline{C}$ depends on the initial conditions, in particular
on $\Vert u_0\Vert$, on $\Vert\varphi_0\Vert_{L^\infty(\Omega)}$ and on $T$
(see \cite[Estimate (2.28)]{BH1}).
Furthermore, if $h\in L^2_{tb}(0,\infty;G_{div})$ then, thanks to the dissipative estimate \eqref{dissest},
we have $\sup_{t\geq 0}\Vert\varphi(t)\Vert_{L^{2+2q}(\Omega)}\leq \overline{C}$, the constant $\overline{C}$ being dependent
on the initial data and on $h$ only. Hence, due to \cite[Estimate (2.28)]{BH1}, the constant $\overline{C}$ in \eqref{phibd}
does not depend on $T$.

As far as the regularity of the velocity $u$ is concerned, notice that, since the Korteweg-force term
$\mu\nabla\varphi\in L^2(0,T;L^2(\Omega)^2)$,
 then by applying \cite[Theorem 3.10]{T}, we immediately obtain \eqref{reg1}$_1$ and \eqref{reg2}$_2$.

Henceforth we shall denote by $c$ a positive constant which depends only on $J$, $F$ and $\Omega$, while $\overline{c}$
will denote a positive constant depending on $J$, $F$, $\Omega$ and also on the initial conditions $u_0$ and $\varphi_0$
(in particular on $\Vert\nabla u_0\Vert$ and on $\Vert\varphi_0\Vert_{L^\infty(\Omega)}$).
The values of both $c$ and $\overline{c}$ may possibly vary from line to line, even within the same estimate.
We shall divide the proof into three main steps.

\textbf{ Step 1. Estimate of $\varphi_t$ in $L^2(0,T;H)$}

We multiply \eqref{sy1} by $\mu_t$ in $H$ and get
\begin{align}
&\int_{\Omega}\varphi_t\mu_t+\int_{\Omega}(u\cdot\nabla\varphi)\mu_t+\frac{1}{2}\frac{d}{dt}\Vert\nabla\mu\Vert^2\nonumber\\
&=\int_{\Omega}(a+F''(\varphi))\varphi_t^2-(\varphi_t,J\ast\varphi_t)
+\int_{\Omega}(u\cdot\nabla\varphi)\mu_t+\frac{1}{2}\frac{d}{dt}\Vert\nabla\mu\Vert^2=0.
\label{diffid1}
\end{align}
Now, we have
\begin{align}
&\Big|\int_{\Omega}(u\cdot\nabla\varphi)\mu_t\Big|
=\Big|\int_{\Omega}(u\cdot\nabla\varphi)(a\varphi_t-J\ast\varphi_t+F''(\varphi)\varphi_t)\Big|\nonumber\\
&\leq\frac{c_0}{4}\Vert\varphi_t\Vert^2+\overline{c}\Vert u\Vert_{H^2}^2\Vert\nabla\varphi\Vert^2,
\label{est1}
\end{align}
and
\begin{align}
&|(\varphi_t,J\ast\varphi_t)|=|(-u\cdot\nabla\varphi+\Delta\mu,J\ast\varphi_t)|\nonumber\\
&\leq |(u\cdot\nabla\varphi,J\ast\varphi_t)|+|(\nabla\mu,\nabla J\ast\varphi_t)|\nonumber\\
&\leq \frac{c_0}{4}\Vert\varphi_t\Vert^2+c\Vert u\Vert_{H^2}^2\Vert\nabla\varphi\Vert^2+c\Vert\nabla\mu\Vert^2.
\label{est2}
\end{align}

Plugging \eqref{est1} and \eqref{est2} into \eqref{diffid1}, using assumption (H2) and integrating the resulting estimate in time between
$0$ and $t$, we obtain
\begin{align}
&\frac{1}{2}\Vert\nabla\mu\Vert^2+\frac{c_0}{2}\int_0^t\Vert\varphi_t\Vert^2 d\tau\leq\frac{1}{2}\Vert\nabla\mu_0\Vert^2
+\int_0^t \overline{c}\Vert u\Vert_{H^2}^2\Vert\nabla\varphi\Vert^2 d\tau+c\int_0^t\Vert\nabla\mu\Vert^2 d\tau,
\label{est3}
\end{align}
and on account of the following
\begin{align}
&\Vert\nabla\mu\Vert^2\geq\frac{c_0^2}{4}\Vert\nabla\varphi\Vert^2-c\Vert\varphi\Vert^2,
\label{est20}
\end{align}
from \eqref{est3} we are led to the differential inequality
\begin{align}
&\Vert\nabla\mu\Vert^2\leq \Vert\nabla\mu_0\Vert^2+\overline{c}_T+\int_0^t m(\tau)\Vert\nabla\mu(\tau)\Vert^2 d\tau,\qquad\forall t\in[0,T],
\end{align}
where $m:=c\left(\Vert u\Vert_{H^2}^2+1\right)\in L^1(0,T)$, for all $T>0$.
Thus the standard Gronwall lemma gives
\begin{align}
\nabla\mu\in L^{\infty}(0,T;H), \qquad\forall T>0,
\label{nablamu1}
\end{align}
so that, using also \eqref{est3}, we infer
\begin{align}
\varphi\in L^{\infty}(0,T;V), \qquad \varphi_t\in L^2(0,T;H),\qquad\forall T>0.
\label{est12}
\end{align}

This concludes the proof of \eqref{reg1} and \eqref{reg2}.

\textbf{Step 2. Estimate of $\varphi_t$ in $L^{\infty}(0,T;H)$}
$\\$
We differentiate \eqref{sy1} with respect to time and multiply the resulting identity in $H$ by $\mu_t$. This yields
\begin{align}
&\int_{\Omega}\varphi_{tt}\mu_t+\int_{\Omega}\mu_t u_t\cdot\nabla\varphi+\int_{\Omega}\mu_t u\cdot\nabla\varphi_t+\Vert\nabla\mu_t\Vert^2=0,
\label{est49}
\end{align}
and, due to \eqref{sy4}, we obtain
\begin{align}
&\int_{\Omega}\varphi_{tt}\mu_t+\Vert\nabla\mu_t\Vert^2=\int_{\Omega}\varphi_t u\cdot\nabla\mu_t+\int_{\Omega}\varphi u_t\cdot\nabla\mu_t,
\label{est50}
\end{align}
which entails
\begin{align}
&\int_{\Omega}\varphi_{tt}\mu_t+\frac{1}{2}\Vert\nabla\mu_t\Vert^2\leq\int_{\Omega}(\varphi_t^2 u^2+\varphi^2 u_t^2).
\label{est4}
\end{align}

Observe now that
\begin{align}
&\int_{\Omega}\varphi_{tt}\mu_t=\int_{\Omega}\varphi_{tt}(a\varphi_t-J\ast\varphi_t+F''(\varphi)\varphi_t)\nonumber\\
&=\frac{1}{2}\frac{d}{dt}\int_{\Omega} a\varphi_t^2
-(J\ast\varphi_t,-u_t\cdot\nabla\varphi-u\cdot\nabla\varphi_t+\Delta\mu_t)
+\int_{\Omega} F''(\varphi)\varphi_t\varphi_{tt}\nonumber\\
&=\frac{1}{2}\frac{d}{dt}\int_{\Omega} (a+F''(\varphi))\varphi_t^2
-(\nabla J\ast\varphi_t,u_t\varphi)-(\nabla J\ast\varphi_t,u\varphi_t)\nonumber\\
&+(\nabla J\ast\varphi_t,\nabla\mu_t)
-\frac{1}{2}\int_{\Omega}F'''(\varphi)\varphi_t^3.
\label{est51}
\end{align}
On the other hand we have
\begin{align}
&|(\nabla J\ast\varphi_t,u_t\varphi)|\leq\Vert\nabla J\Vert_{L^1}\Vert u_t\Vert\Vert\varphi\Vert_{L^{\infty}}\Vert\varphi_t\Vert
\leq\frac{1}{2}\Vert u_t\Vert^2\Vert\varphi_t\Vert^2+\overline{c},\nonumber\\
&|(\nabla J\ast\varphi_t,u\varphi_t)|\leq \Vert\nabla J\Vert_{L^1}\Vert u \Vert_{L^{\infty}}\Vert\varphi_t\Vert^2
\leq c\Vert u \Vert_{H^2}\Vert\varphi_t\Vert^2,\nonumber\\
&|(\nabla J\ast\varphi_t,\nabla\mu_t)|\leq\frac{1}{4}\Vert\nabla\mu_t\Vert^2+\Vert\nabla J\Vert_{L^1}^2\Vert\varphi_t\Vert^2.\nonumber
\end{align}
Therefore from \eqref{est4} we get
\begin{align}
\frac{1}{2}\frac{d}{dt}\int_{\Omega} (a+F''(\varphi))\varphi_t^2+\frac{1}{4}\Vert\nabla\mu_t\Vert^2
&\leq c(\Vert u \Vert_{H^2}^2+\Vert u \Vert_{H^2}+\Vert u_t\Vert^2+1)\Vert\varphi_t\Vert^2\nonumber\\
&+\Vert\varphi\Vert_{L^{\infty}}^2\Vert u_t\Vert^2+\frac{1}{2}\int_{\Omega}F'''(\varphi)\varphi_t^3+\overline{c}.
\label{est8}
\end{align}
The integral term containing $\varphi_t^3$ can be estimated by means of Gagliardo-Nirenberg inequality in dimension two, that is,
\begin{equation}
\Big|\frac{1}{2}\int_{\Omega}F'''(\varphi)\varphi_t^3\Big|\leq \overline{c}\Vert\varphi_t\Vert_{L^3}^3
\leq \overline{c}(\Vert\varphi_t\Vert^3+\Vert\varphi_t\Vert^2\Vert\nabla\varphi_t\Vert)
\leq\frac{c_0^2}{32}\Vert\nabla\varphi_t\Vert^2+\overline{c}\Vert\varphi_t\Vert^4+\overline{c}.
\label{est9}
\end{equation}
We now need to estimate $\nabla\varphi_t$ in terms of $\nabla\mu_t$. In order to do that, let us
first control $\nabla\varphi$ in terms of $\nabla\mu$ in $L^p$, for every $2\leq p<\infty$.
We then take the gradient of $\mu=a\varphi-J\ast\varphi+F'(\varphi)$, multiply it
by $\nabla\varphi|\nabla\varphi|^{p-2}$ and integrate the resulting identity on $\Omega$. We get
\begin{align}
&\int_{\Omega}\nabla\varphi|\nabla\varphi|^{p-2}\cdot\nabla\mu=\int_{\Omega}(a+F''(\varphi))|\nabla\varphi|^p
+\int_{\Omega}(\varphi\nabla a-\nabla J\ast\varphi)\cdot\nabla\varphi|\nabla\varphi|^{p-2},\nonumber
\end{align}
and so, by (H2), we find
\begin{align}
c_0\Vert\nabla\varphi\Vert_{L^p}^p
&\leq\Vert\nabla\varphi\Vert_{L^p}^{p-1}\Vert\nabla\mu\Vert_{L^p}
+(\Vert\nabla a\Vert_{L^{\infty}}+\Vert \nabla J\Vert_{L^1})\Vert\varphi\Vert_{L^p}\Vert\nabla\varphi\Vert_{L^p}^{p-1}\nonumber\\
&\leq\frac{c_0}{2}\Vert\nabla\varphi\Vert_{L^p}^p+c\Vert\nabla\mu\Vert_{L^p}^p+c(\Vert\nabla a\Vert_{L^{\infty}}+\Vert \nabla J\Vert_{L^1})^p\nonumber
\Vert\varphi\Vert_{L^p}^p.
\end{align}
We therefore obtain
\begin{align}
&\Vert\nabla\varphi\Vert_{L^p}\leq c\Vert\nabla\mu\Vert_{L^p}+\overline{c},
\label{est5}
\end{align}
with $\overline{c}$ depending also on $p$.
We now see that the $L^p-$norm of $\nabla\mu$ can be estimated in terms of the $L^2-$norm of $\varphi_t$.
Indeed, using once more the two dimensional Gagliardo-Nirenberg inequality, we infer
\begin{align}
&\Vert\nabla\mu\Vert_{L^p}\leq c\Vert\nabla\mu\Vert^{2/p}\Vert\nabla\mu\Vert_{H^1}^{1-2/p}\nonumber\\
&\leq c\Vert\nabla\mu\Vert^{2/p}\Vert\mu\Vert_{H^2}^{1-2/p}\leq c\Vert\nabla\mu\Vert^{2/p}(\Vert\Delta\mu\Vert^{1-2/p}+\Vert\mu\Vert^{1-2/p})\nonumber\\
&\leq \overline{c}(\Vert\varphi_t\Vert^{1-2/p}+\Vert u\cdot\nabla\varphi\Vert^{1-2/p}+1)\nonumber\\
&\leq \overline{c}(\Vert\varphi_t\Vert^{1-2/p}+\Vert u\Vert_{L^q}^{1-2/p}\Vert\nabla\varphi\Vert_{L^p}^{1-2/p}+1),\nonumber
\end{align}
where $p^{-1}+q^{-1}=1/2$ and where we have taken into account \eqref{nablamu1} and the fact that the $H^2-$norm of $\mu$ is equivalent to the $L^2-$ norm of $-\Delta\mu +\mu$,
due to \eqref{sy5}.
By \eqref{est5} we therefore deduce the desired estimate
\begin{align}
&\Vert\nabla\mu\Vert_{L^p}\leq \overline{c}(1+\Vert\varphi_t\Vert^{1-2/p}).
\label{est13}
\end{align}

We now take the gradient of $\mu_t$ and multiply it in $L^2$ by $\nabla\varphi_t$. We get
\begin{align}
&\int_{\Omega}\nabla\mu_t\cdot\nabla\varphi_t=\int_{\Omega}(a+F''(\varphi))|\nabla\varphi_t|^2\nonumber\\
&+\int_{\Omega}(\nabla a\varphi_t-\nabla J\ast\varphi_t)
\cdot\nabla\varphi_t+\int_{\Omega} F'''(\varphi)\varphi_t\nabla\varphi\cdot\nabla\varphi_t.
\label{est6}
\end{align}
Observe that we have
\begin{align}
&\Big|\int_{\Omega} F'''(\varphi)\varphi_t\nabla\varphi\cdot\nabla\varphi_t\Big|\leq c\Vert\varphi_t\Vert_{L^3}\Vert\nabla\varphi\Vert_{L^6}\Vert\nabla\varphi_t\Vert\nonumber\\
&\leq \overline{c}\Big(\Vert\varphi_t\Vert+\Vert\varphi_t\Vert^{2/3}\Vert\nabla\varphi_t\Vert^{1/3}\Big)   \Big(1+\Vert\varphi_t\Vert^{2/3}\Big)\Vert\nabla\varphi_t\Vert\nonumber\\
&\leq \overline{c}\Big(\Vert\varphi_t\Vert^{5/3}\Vert\nabla\varphi_t\Vert+\Vert\varphi_t\Vert^{4/3}\Vert\nabla\varphi_t\Vert^{4/3}
+\Vert\varphi_t\Vert^{2/3}\Vert\nabla\varphi_t\Vert^{4/3}+\Vert\varphi_t\Vert\Vert\nabla\varphi_t\Vert\Big)\nonumber\\
&\leq\frac{c_0}{4}\Vert\nabla\varphi_t\Vert^2+\overline{c}\Vert\varphi_t\Vert^4+\overline{c},
\label{est7}
\end{align}
Thus from \eqref{est6} and \eqref{est7}
and using also (H2), we deduce
\begin{align}
&\frac{1}{c_0}\Vert\nabla \mu_t\Vert^2+\frac{c_0}{4}\Vert\nabla\varphi_t\Vert^2\geq\Vert\nabla\mu_t\Vert\Vert\nabla\varphi_t\Vert
\geq c_0\Vert\nabla\varphi_t\Vert^2-\frac{c_0}{4}\Vert\nabla\varphi_t\Vert^2-\overline{c}\Vert\varphi_t\Vert^2\nonumber\\
&-\frac{c_0}{4}\Vert\nabla\varphi_t\Vert^2-\overline{c}\Vert\varphi_t\Vert^4-\overline{c},\nonumber
\end{align}
so that
\begin{align}
&\frac{4}{c_0^2}\Vert\nabla \mu_t\Vert^2\geq\Vert\nabla\varphi_t\Vert^2-\overline{c}\Vert\varphi_t\Vert^4-\overline{c}.
\label{est10}
\end{align}

We now go back to \eqref{est8}. By combining \eqref{est9} and \eqref{est10} we obtain
\begin{align}
&\frac{1}{2}\frac{d}{dt}\int_{\Omega} (a+F''(\varphi))\varphi_t^2+\frac{1}{8}\Vert\nabla\mu_t\Vert^2
\leq\alpha(t)\Vert\varphi_t\Vert^2+\overline{c}\Vert\varphi_t\Vert^4+\beta(t)+\overline{c},\label{est11}
\end{align}
where $\alpha:=c(\Vert u \Vert_{H^2}^2+\Vert u \Vert_{H^2}+\Vert u_t\Vert^2+1)$ and $\beta:=\Vert\varphi\Vert_{L^{\infty}}^2\Vert u_t\Vert^2$.
We have $\alpha$, $\beta\in L^1(0,T)$. From \eqref{est11} we can easily infer the desired estimate. Indeed, let
us multiply \eqref{est11} by $(1+\int_{\Omega}(a+F''(\varphi))\varphi_t^2)^{-1}$ and get
\begin{align}
&\frac{1}{2}\frac{d}{dt}\log\Big(1+\int_{\Omega}(a+F''(\varphi))\varphi_t^2\Big)\leq\frac{1}{c_0}\alpha(t)+\frac{\overline{c}\Big(\int_{\Omega}\varphi_t^2\Big)^2}
{1+\int_{\Omega}(a+F''(\varphi))\varphi_t^2}+\beta(t)+\overline{c}\nonumber\\
&\leq\frac{1}{c_0}\alpha(t)+\beta(t)+\overline{c}\Vert\varphi_t\Vert^2+\overline{c}.\nonumber
\end{align}
Integrating this last inequality between $0$ and $t\in(0,T)$ and using the second of \eqref{est12}
and the fact that $\varphi_t(0)\in H$ (since $\varphi_0\in H^2(\Omega)$) we therefore deduce that
\begin{align}
\varphi_t\in L^{\infty}(0,T;H),\qquad\forall T>0.
\label{est14}
\end{align}
In particular, on account of \eqref{est5} and \eqref{est13}, we also have
\begin{align}
&\nabla\mu,\nabla\varphi\in L^{\infty}(0,T;L^p(\Omega)),\qquad\forall T>0,\qquad 2\leq p<\infty.
\label{est15}
\end{align}
Furthermore, by integrating \eqref{est11} between 0 and $t\in[0,T]$ and using \eqref{est10} and \eqref{est14}, we also get
\begin{align}
&\varphi_t\in L^2(0,T;V).
\end{align}
By comparison in \eqref{sy1} we can finally obtain estimates for $\mu$ and $\varphi$ in $L^{\infty}(0,T;H^2(\Omega))$.
Indeed, we have
\begin{align}
&\Vert\Delta\mu\Vert\leq\Vert\varphi_t\Vert+c\Vert\nabla u\Vert\Vert\nabla\varphi\Vert_{L^p},
\label{est43}
\end{align}
which implies that $\Delta\mu\in L^{\infty}(0,T;L^2(\Omega))$, thanks to \eqref{est14} and \eqref{est15}.
Recalling \eqref{sy5} and the smoothness of $\partial\Omega$, we also have
\begin{align}
&\mu\in L^{\infty}(0,T;H^2(\Omega)).
\label{est16}
\end{align}

Apply now the second derivative operator $\partial^2_{ij}:=\frac{\partial^2}{\partial x_i\partial x_j}$ to \eqref{sy2},
multiply the resulting identity by $\partial^2_{ij}\varphi$ and integrate on $\Omega$. Using the assumption $J\in W^{2,1}(\mathbb{R}^2)$, we get
\begin{align}
&\int_{\Omega}\partial^2_{ij}\mu\partial^2_{ij}\varphi
=\int_{\Omega}(a+F''(\varphi))(\partial^2_{ij}\varphi)^2+\int_{\Omega}(\partial_i a\partial_j\varphi+\partial_j a\partial_i\varphi)\partial^2_{ij}\varphi
\nonumber\\
&+\int_{\Omega}(\varphi\partial^2_{ij}a -\partial^2_{ij}J\ast\varphi)\partial^2_{ij}\varphi
+\int_{\Omega}F'''(\varphi)\partial_i\varphi\partial_j\varphi\partial^2_{ij}\varphi.\nonumber
\end{align}
From this identity, by means of (H2) and \eqref{est15} it is easy to obtain
\begin{align}
&\Vert\partial^2_{ij}\mu\Vert^2\geq\frac{c_0^2}{4}\Vert\partial^2_{ij}\varphi\Vert^2-\overline{c}.
\label{est44}
\end{align}
Such estimate together with \eqref{est16} entail
\begin{align}
&\varphi\in L^{\infty}(0,T;H^2(\Omega)).
\end{align}

\textbf{Step 3. Continuous dependence and uniqueness of strong solutions}
$\\$
Let us consider two strong solutions $z_1:=[u_1,\varphi_1]$ and $z_2:=[u_2,\varphi_2]$ corresponding to initial data
 $z_{01}:=[u_{01},\varphi_{01}]$ and $z_{02}:=[u_{02},\varphi_{02}]$ and to external forces $h_1$ and $h_2$, respectively.
 Taking the difference between the variational formulation of \eqref{sy1} and \eqref{sy2} written for each solution and setting $u:=u_2-u_1$, $\varphi:=\varphi_2-\varphi_1$,
$\mu:=\mu_2-\mu_1$ and $h:=h_2-h_1$, we have
\begin{align}
&\langle u_t,v\rangle+\nu(\nabla u,\nabla v)+b(u_2,u_2,v)-b(u_1,u_1,v)=-(\varphi_2\nabla\mu_2,v)+(\varphi_1\nabla\mu_1,v)+(h,v)\\
&\langle\varphi_t,\psi\rangle+(\nabla\mu,\nabla\psi)=(u_2\varphi_2,\nabla\psi)-(u_1\varphi_1,\nabla\psi),
\end{align}
for every $v\in V_{div}$ and every $\psi\in V$. Let us choose $v=u$ and $\psi=\mathcal{N}\varphi$ and sum the first resulting identity
to the second one multiplied by $\gamma$, where the positive constant $\gamma$ will be suitably chosen. After some easy calculations we obtain
\begin{align}
&\frac{1}{2}\frac{d}{dt}\Vert u\Vert^2+\nu\Vert\nabla u\Vert^2+b(u_2,u_2,u)-b(u_1,u_1,u)
+\frac{\gamma}{2}\frac{d}{dt}\Vert\varphi\Vert_{V_0'}^2+\gamma(\varphi,\mu)\nonumber\\
&=-(\varphi\nabla\mu_2,u)-(\varphi_1\nabla\mu,u)+\gamma(u_2,\varphi\nabla\mathcal{N}\varphi)+\gamma (u,\varphi_1\nabla\mathcal{N}\varphi)
+(h,u).
\label{uniq1}
\end{align}
Notice that
\begin{align}
&\gamma(\varphi,\mu)=\gamma(\varphi,a\varphi-J\ast\varphi+F'(\varphi_2)-F'(\varphi_1))
\geq c_0\gamma\Vert\varphi\Vert^2-\gamma(\varphi,J\ast\varphi)\nonumber\\
&\geq c_0\gamma\Vert\varphi\Vert^2-\gamma\Vert\varphi\Vert_{V_0'}\Vert J\Vert_V\Vert\varphi\Vert
\geq c_0\gamma\Vert\varphi\Vert^2-\Vert\varphi\Vert^2-c\gamma^2\Vert\varphi\Vert_{V_0'}^2.
\label{est17}
\end{align}
Furthermore, as far as the first two terms on the right hand side of \eqref{uniq1} are concerned, we have
\begin{align}
&|(\varphi\nabla\mu_2,u)|\leq\Vert\varphi\Vert\Vert\nabla\mu_2\Vert_{L^4}\Vert u\Vert_{L^4}\leq\frac{\nu}{4}\Vert\nabla u\Vert^2
+c\Vert\nabla\mu_2\Vert_{L^4}^2\Vert\varphi\Vert^2,\\
&|(\varphi_1\nabla\mu,u)|=|(\mu\nabla\varphi_1,u)|\leq\Vert\mu\Vert\Vert\nabla\varphi_1\Vert_{L^4}\Vert u\Vert_{L^4}
\leq\frac{\nu}{4}\Vert\nabla u\Vert^2
+c\Vert\nabla\varphi_1\Vert_{L^4}^2\Vert\varphi\Vert^2,
\end{align}
where we have used the bound
$$\Vert\mu\Vert=\Vert a\varphi-J\ast\varphi+F'(\varphi_2)-F'(\varphi_1)\Vert\leq 2\Vert a\Vert_{L^{\infty}}\Vert\varphi\Vert+c\Vert\varphi\Vert\leq c\Vert\varphi\Vert.$$
The last two terms on the right hand side of \eqref{uniq1} can be estimated as follows
\begin{align}
&|\gamma(u_2,\varphi\nabla\mathcal{N}\varphi)|\leq\gamma\Vert u_2\Vert_{L^{\infty}}\Vert\varphi\Vert\Vert\nabla\mathcal{N}\varphi\Vert
\leq c\gamma\Vert u_2\Vert_{H^2}\Vert\varphi\Vert\Vert\varphi\Vert_{V_0'}\nonumber\\
&\leq\Vert\varphi\Vert^2+c\gamma^2\Vert u_2\Vert_{H^2}^2\Vert\varphi\Vert_{V_0'}^2,\\
&|\gamma (u,\varphi_1\nabla\mathcal{N}\varphi)|\leq \frac{\gamma}{2}\Vert u\Vert^2+\frac{\gamma}{2}\Vert\varphi_1\Vert_{L^{\infty}}^2\Vert\varphi\Vert_{V_0'}^2.
\end{align}
Consider the trilinear forms on the left hand side of \eqref{uniq1}. By \eqref{ftril2} we have
\begin{align}
&b(u_2,u_2,u)-b(u_1,u_1,u)=b(u,u_1,u)\leq c\Vert u\Vert\Vert \nabla u_1\Vert\Vert\nabla u\Vert\nonumber\\
&\leq\frac{\nu}{4}\Vert\nabla u\Vert^2+c\Vert\nabla u_1\Vert^2\Vert u\Vert^2
\label{est18}
\end{align}
Plugging \eqref{est17}--\eqref{est18} into \eqref{uniq1} we get
\begin{align}
&
\frac{1}{2}\frac{d}{dt}\Big(\Vert u\Vert^2+\gamma\Vert\varphi\Vert_{V_0'}^2\Big)+\frac{\nu}{8}\Vert\nabla u\Vert^2
+\gamma c_0\Vert\varphi\Vert^2\leq c(1+\Vert\nabla\varphi_1\Vert_{L^4}^2+\Vert\nabla\mu_2\Vert_{L^4}^2)\Vert\varphi\Vert^2\nonumber\\
&
+c\gamma(\gamma\Vert u_2\Vert_{H^2}^2+\Vert\varphi_1\Vert_{L^{\infty}}^2+\gamma)\Vert\varphi\Vert_{V_0'}^2
+c(\gamma+\Vert\nabla u_1\Vert^2)\Vert u\Vert^2+\frac{2}{\nu\lambda_1}\Vert h\Vert^2.
\label{est19}
\end{align}
Thanks to \eqref{est15}, we can now choose $\gamma=\gamma_{\ast}$ such that
$$\Gamma_{\ast}:=c_0\gamma_{\ast}-c(1+\Vert\nabla\varphi_1\Vert_{L^{\infty}(0,T;L^4(\Omega))}^2+\Vert\nabla\mu_2\Vert_{L^{\infty}(0,T;L^4(\Omega))}^2)>0.$$
Hence from \eqref{est19} we deduce
\begin{align}
\label{th}
&
\frac{1}{2}\frac{d}{dt}\Big(\Vert u\Vert^2+\gamma_{\ast}\Vert\varphi\Vert_{V_0'}^2\Big)+\frac{\nu}{8}\Vert\nabla u\Vert^2
+\Gamma_{\ast}\Vert\varphi\Vert^2\leq\eta(t)\Big(\Vert u\Vert^2+\gamma_{\ast}\Vert\varphi\Vert_{V_0'}^2\Big)+\frac{2}{\nu\lambda_1}\Vert h\Vert^2,
\end{align}
where
$$\eta:=c(\Vert\nabla u_1\Vert^2+\gamma_{\ast}\Vert u_2\Vert_{H^2}^2+\Vert\varphi_1\Vert_{L^{\infty}}^2+\gamma_{\ast})\in L^1(0,T),\qquad\forall T>0.$$

The standard Gronwall lemma then yields
\begin{align}
&\Vert u(t)\Vert^2+\gamma_{\ast}\Vert\varphi(t)\Vert_{V_0'}^2\leq e^{2\int_0^t\eta(s)ds}\Big(\Vert u_0\Vert^2+\gamma_{\ast}\Vert\varphi_0\Vert_{V_0'}^2+
\frac{4}{\nu\lambda_1}\Vert h\Vert_{L^2(0,t;G_{div})}^2\Big),
\label{est28}
\end{align}
for every $t\in[0,T]$, where we have set $u_0:=u_{02}-u_{01}$ and $\varphi_0:=\varphi_{02}-\varphi_{01}$.
By integrating \eqref{th} between 0 and $t$ and taking \eqref{est28} into account, we also get
\begin{align}
&\frac{\nu}{4}\int_0^t\Vert\nabla u\Vert^2 d\tau+2\Gamma_\ast\int_0^t\Vert\varphi\Vert^2 d\tau\nonumber\\
&\leq\Big(\Vert u_0\Vert^2+\gamma_{\ast}\Vert\varphi_0\Vert_{V_0'}^2+
\frac{4}{\nu\lambda_1}\Vert h\Vert_{L^2(0,t;G_{div})}^2\Big)\Big(1+2e^{2\int_0^t\eta(s)ds}\int_0^t\eta(s)ds \Big),
\label{est29}
\end{align}
for every $t\in[0,T]$.
Finally, by combining \eqref{est28} and \eqref{est29}, we obtain \eqref{contdipest}.
\end{proof}

\begin{oss}\label{timereg2}
{\upshape
It is not difficult to see that the $\varphi$ component
of the strong solution to system \eqref{sy1}--\eqref{sy5}
satisfies
\begin{align}
&\varphi\in C([0,\infty);H^2(\Omega)).\label{phitimereg}
\end{align}
Indeed, by combining \eqref{est49}--\eqref{est51} and taking into account the regularity properties of the strong solution, we can see that $\int_\Omega\big(a+F''(\varphi)\big)\varphi_t^2$ is absolutely continuous
on $[0,\infty)$. Using (H2) and the fact that
$\varphi\in C([0,\infty);C(\overline{\Omega}))$ (see Remark \ref{timereg})
 we get $\Vert\varphi_t\Vert^2\in C([0,\infty))$.
 Now, \eqref{est16} and $\mu_t\in L^2_{loc}([0,\infty);V)$ imply that $\mu\in C([0,\infty);V)$
 and, by using \eqref{est16} again, we also have $\mu\in C_w([0,\infty);H^2(\Omega))$ so that
 $\Delta\mu\in C_w([0,\infty);H)$. Moreover,
 since $u\in C([0,\infty);L^4(\Omega))$ and $\nabla\varphi\in C_w([0,\infty);L^4(\Omega))$
 (cf. Remark \ref{timereg}), then we have $u\cdot\nabla\varphi\in C_w([0,\infty);H)$.
 Thus from \eqref{sy1} we deduce that $\varphi_t\in C_w([0,\infty);H)$
 and, on account of the continuity of $t\mapsto \Vert\varphi_t(t)\Vert$, then $\varphi_t\in C([0,\infty);H)$.
 Recall now that $\nabla\varphi\in C([0,\infty);H^\epsilon(\Omega)^2)$, for every $\epsilon\in [0,1)$
 (cf. Remark \ref{timereg}). Then, choosing $\epsilon\in[1/2,1)$, we have $\nabla\varphi\in C([0,\infty);L^4(\Omega)^2)$. Thus $u\cdot\nabla\varphi\in C([0,\infty);H)$ and so
 \eqref{sy1} yields $\Delta\mu\in C([0,\infty);H)$ which entails $\mu\in C([0,\infty);H^2(\Omega))$.
 This and the assumption $J\in W^{2,1}(\mathbb{R}^2)$ allow us to deduce
 \eqref{phitimereg}.
}
\end{oss}


\section{Uniform estimates and the global attractor}\setcounter{equation}{0}
In this section we establish some uniform in time regularization estimates by exploiting the results proved in the previous section.
As a consequence we deduce a regularity property for the global attractor
of the dynamical system generated by \eqref{sy1}--\eqref{sy5} whose existence
has been shown in \cite{FG1}.

\begin{prop}
Let $h\in L^2_{tb}(0,\infty;G_{div})$, $u_0\in V_{div}$, $\varphi_0\in V\cap L^{\infty}(\Omega)$
and suppose that (H1)-(H4) are satisfied.
Then, the weak solution $[u,\varphi]$ of Theorem \ref{main} satisfies
\begin{align}
& u\in L^{\infty}(0,\infty;V_{div})\cap L^2_{tb}(0,\infty;H^2(\Omega)^2),\quad\varphi \in L^\infty(\Omega\times(0,\infty))\cap L^\infty(0,\infty;V),
\label{reg5}\\
&u_t\in L^2_{tb}(0,\infty;G_{div}),\quad\varphi_t\in L^2_{tb}(0,\infty;H).\label{reg6}
\end{align}
Furthermore, suppose in addition that $F\in C^3(\mathbb{R})$ and that $\varphi_0\in H^2(\Omega)$.
Then, the unique strong solution of Theorem \ref{main} satisfies \eqref{reg5}, \eqref{reg6}
and, in addition,
\begin{align}
&\varphi\in L^{\infty}(0,\infty;W^{1,p}(\Omega)),\qquad 2\leq p<\infty,\label{reg7}\\
& \varphi_t\in L^{\infty}(0,\infty;H)\cap L^2_{tb}(0,\infty;V).\label{reg8}
\end{align}

If $J\in W^{2,1}(\mathbb{R}^2)$, we also have
\begin{align}
&\varphi\in L^{\infty}(0,\infty;H^2(\Omega)).
\label{reg9}
\end{align}
Moreover, there exists a constant $\Lambda_1=\Lambda_1(m)$, depending on $m$ (and on $F$, $J$, $\Omega$, $\nu$),
such that, for every initial data $z_0:=[u_0,\varphi_0]\in V_{div}\times H^2(\Omega)$, with $|\overline{\varphi}_0|\leq m$,
there exists a time $t^\ast:=t^\ast(\mathcal{E}(z_0))\geq 0$ such that the strong solution corresponding to $z_0$
satisfies
\begin{align}
&\Vert\nabla u(t)\Vert+\Vert\varphi(t)\Vert_{H^2(\Omega)}+\int_t^{t+1}\Vert u(s)\Vert_{H^2(\Omega)^2}
\leq\Lambda_1(m),\qquad\forall t\geq t^\ast.\label{secdissest}
\end{align}

\end{prop}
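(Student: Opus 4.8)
The plan is to reproduce the three-step argument in the proof of Theorem \ref{main}, but with the standard Gronwall lemma replaced everywhere by the uniform Gronwall lemma (Lemma \ref{unifGronw}). The source of uniform-in-time control is the dissipative estimate \eqref{dissest}: since $h\in L^2_{tb}(0,\infty;G_{div})$, it bounds $\mathcal{E}(u(t),\varphi(t))$ uniformly, hence $\Vert u(t)\Vert$ and $\int_\Omega F(\varphi(t))$, and, integrating the energy identity \eqref{eniden} over $[t,t+1]$, it yields the translation bounds $\int_t^{t+1}(\Vert\nabla u\Vert^2+\Vert\nabla\mu\Vert^2)\,d\tau\le a_3$. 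Together with Remark \ref{eventbd} (with $t_0=1$), which gives $\sup_{t\ge 2}\Vert\varphi(t)\Vert_{L^\infty(\Omega)}$ controlled only by $m$, one obtains $\varphi\in L^\infty(\Omega\times(0,\infty))$ and, since $F\in C^2$, also $\mu=a\varphi-J\ast\varphi+F'(\varphi)\in L^\infty(\Omega\times(0,\infty))$. These are precisely the three uniform integral bounds feeding Lemma \ref{unifGronw}.

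First I would establish \eqref{reg5}--\eqref{reg6} for the velocity. Testing \eqref{sy3} by $Su$ as in \eqref{est47}, estimating $(Bu,Su)$ by \eqref{ftril3} and Young's inequality, and bounding $\Vert\mu\nabla\varphi\Vert\le\Vert\mu\Vert_{L^\infty}\Vert\nabla\varphi\Vert$ together with $\Vert\nabla\varphi\Vert\le c\Vert\nabla\mu\Vert+\overline{c}$ via \eqref{est20}, while using $\Vert u\Vert\le c$, leads to
\begin{align}
&\frac{d}{dt}\Vert\nabla u\Vert^2+\nu\Vert Su\Vert^2\le c\,\Vert\nabla u\Vert^2\,\Vert\nabla u\Vert^2+c\Vert\nabla\mu\Vert^2+c\Vert h\Vert^2+\overline{c}.\nonumber
\end{align}
Here $\omega_1=c\Vert\nabla u\Vert^2$ and $\omega_2=c\Vert\nabla\mu\Vert^2+c\Vert h\Vert^2+\overline{c}$ are translation bounded and $\int_t^{t+1}\Vert\nabla u\Vert^2\,d\tau\le a_3$, so Lemma \ref{unifGronw} gives $u\in L^\infty(0,\infty;V_{div})$; integrating the same inequality over $[t,t+1]$ yields $u\in L^2_{tb}(0,\infty;H^2(\Omega)^2)$, and a comparison in \eqref{sy3} gives $u_t\in L^2_{tb}(0,\infty;G_{div})$. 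With $u\in L^2_{tb}(0,\infty;H^2)$ in hand, the Step 1 computation behind \eqref{diffid1}--\eqref{nablamu1} takes the form $\frac{d}{dt}\Vert\nabla\mu\Vert^2\le\omega_1\Vert\nabla\mu\Vert^2+\omega_2$ with $\omega_1,\omega_2=c(\Vert u\Vert_{H^2}^2+1)$ translation bounded; Lemma \ref{unifGronw} upgrades \eqref{nablamu1} to $\nabla\mu\in L^\infty(0,\infty;H)$, whence $\varphi\in L^\infty(0,\infty;V)$ by \eqref{est20} and $\varphi_t\in L^2_{tb}(0,\infty;H)$.

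For \eqref{reg7}--\eqref{reg9} I would revisit Step 2. Writing $\Phi:=\int_\Omega(a+F''(\varphi))\varphi_t^2$ (equivalent to $\Vert\varphi_t\Vert^2$ by (H2) and boundedness of $\varphi$) in \eqref{est11}, the quartic term is absorbed as $\overline{c}\Vert\varphi_t\Vert^4\le c\Vert\varphi_t\Vert^2\,\Phi$, giving $\frac{d}{dt}\Phi\le\omega_1\Phi+\omega_2$ with $\omega_1=c(\Vert u\Vert_{H^2}^2+\Vert u\Vert_{H^2}+\Vert u_t\Vert^2+\Vert\varphi_t\Vert^2+1)$ and $\omega_2=c\Vert\varphi\Vert_{L^\infty}^2\Vert u_t\Vert^2+\overline{c}$, all translation bounded thanks to the bounds already established, while $\int_t^{t+1}\Phi\,d\tau\le a_3$ since $\varphi_t\in L^2_{tb}(0,\infty;H)$. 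Lemma \ref{unifGronw} then yields $\varphi_t\in L^\infty(0,\infty;H)$, which through \eqref{est5} and \eqref{est13} gives \eqref{reg7}; integrating \eqref{est11} over $[t,t+1]$ and invoking \eqref{est10} gives $\varphi_t\in L^2_{tb}(0,\infty;V)$. Finally, under $J\in W^{2,1}(\mathbb{R}^2)$, a comparison in \eqref{sy1} through \eqref{est43} gives $\Delta\mu\in L^\infty(0,\infty;H)$, hence $\mu\in L^\infty(0,\infty;H^2)$, and \eqref{est44} delivers \eqref{reg9}.

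It remains to prove \eqref{secdissest}, which is the delicate point, since $\Lambda_1$ must depend only on $m$ and not on $\mathcal{E}(z_0)$. The idea is to run all the previous differential inequalities only after the trajectory has entered its absorbing set. By \eqref{dissest} there is $t^\ast=t^\ast(\mathcal{E}(z_0))$ such that $\mathcal{E}(u(t),\varphi(t))\le R_0(m)$ for $t\ge t^\ast$, with $R_0$ depending only on $m$ (and $F,J,\Omega,\nu,h$); by Remark \ref{eventbd} we may also assume $\Vert\varphi(t)\Vert_{L^\infty(\Omega)}\le C_m$ for $t\ge t^\ast$. For $t\ge t^\ast$ the energy identity then bounds $\int_t^{t+1}(\Vert\nabla u\Vert^2+\Vert\nabla\mu\Vert^2)\,d\tau$, as well as $\Vert u\Vert$ and $\Vert\mu\Vert_{L^\infty}$, by constants depending only on $m$. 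Hence, restarting the cascade above on $[t^\ast,\infty)$, every constant $\overline{c}$ is now controlled by $m$, and each application of Lemma \ref{unifGronw} advances the time origin by one unit while producing a bound depending only on $m$: successively $\Vert\nabla u(t)\Vert^2$ and $\int_t^{t+1}\Vert u\Vert_{H^2}^2$, then $\Vert\varphi_t(t)\Vert^2$, and finally, via \eqref{est43}--\eqref{est44}, $\Vert\varphi(t)\Vert_{H^2}^2$. Collecting these bounds, and relabelling $t^\ast$ to absorb the finitely many unit shifts, yields \eqref{secdissest} with $\Lambda_1=\Lambda_1(m)$. The main obstacle is precisely this bookkeeping: one must check that the constants $a_1,a_2,a_3$ entering Lemma \ref{unifGronw} on each interval $[s,s+1]$ with $s\ge t^\ast$ depend only on $m$, which rests on the absorbing property of \eqref{dissest} and on the $m$-dependent eventual $L^\infty$ bound of Remark \ref{eventbd}.
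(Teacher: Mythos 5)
Your proposal is correct and follows essentially the same route as the paper: the dissipative estimate \eqref{dissest} and the energy identity integrated over $[t,t+1]$, combined with the eventual $L^\infty$ bound of Remark \ref{eventbd}, supply exactly the translation-bounded quantities needed to rerun the differential inequalities of Steps 1 and 2 of Theorem \ref{main} (including the absorption of the quartic term $\overline{c}\Vert\varphi_t\Vert^4$ into $\omega_1\Phi$) with the uniform Gronwall lemma, and the final bookkeeping that makes all constants depend only on $m$ after a transient time $t^\ast(\mathcal{E}(z_0))$ is precisely how the paper obtains \eqref{secdissest}. No gaps.
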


\begin{proof}

Let us first notice that, setting $z(t):=[u(t),\varphi(t)]$ and $z_0:=[u_0,\varphi_0]$, by integrating the energy identity \eqref{eniden} between $t$ and $t+1$ we have
\begin{align}
&\mathcal{E}(z(t+1))+\int_t^{t+1}\Big(\frac{\nu}{2}\Vert\nabla u\Vert^2+\Vert\nabla\mu\Vert^2\Big)d\tau\leq
\mathcal{E}(z(t))+\frac{1}{2\nu\lambda_1}\int_t^{t+1}\Vert h\Vert^2d\tau.
\end{align}
Therefore, using also the dissipative estimate \eqref{dissest}, we get
\begin{align}
\label{est21}
&\int_t^{t+1}\Big(\frac{\nu}{2}\Vert\nabla u\Vert^2+\Vert\nabla\mu\Vert^2\Big)d\tau\leq
\mathcal{E}(z_0)e^{-kt}+ F(m)|\Omega|+ K
\end{align}
where the constant $K$ depends on $\|h\|_{L^2_{tb}(0,\infty;G_{div})}$ and on $F$, $J$, $\Omega$, $\nu$.
Notice that the initial energy $\mathcal{E}(z_0)$ can be estimated as
$$\mathcal{E}(z_0)\leq\frac{1}{2}\Vert u_0\Vert^2+M\Vert\varphi_0\Vert^2+\int_{\Omega}F(\varphi_0),\qquad M:=\sup_{x\in\Omega}\int_{\Omega}|J(x-y)|dy.$$
From \eqref{est21}, setting $\Lambda_0(m):=F(m)|\Omega|+K+1$, we deduce that there exists a time $t_0=t_0(\mathcal{E}(z_0))>0$, given e.g.
by $t_0=\frac{1}{k}\log(\mathcal{E}(z_0)+c)$, where $\mathcal{E}(z_0)+c > 1$, such that
\begin{align}
\label{est31}
&\int_t^{t+1}\Big(\frac{\nu}{2}\Vert\nabla u\Vert^2+\Vert\nabla\mu\Vert^2\Big)d\tau\leq
\Lambda_0(m),\qquad\forall t\geq t_0.
\end{align}

We now establish the uniform in time version of estimates \eqref{reg1}$_1$ and \eqref{reg2}$_1$ for the velocity field.
To this aim,
notice first that \eqref{ftril3} implies (see also \cite[Lemma 3.8]{T})
$$\Vert Bu\Vert\leq c\Vert u\Vert^{1/2}\Vert\nabla u\Vert \Vert Su\Vert^{1/2},
 \qquad\forall u\in D(S)=H^2(\Omega)^2\cap V_{div}.$$
Therefore, by splitting the term $(Bu,Su)$ on the left hand side of \eqref{est47}
and using the estimate above, we get the following differential inequality
\begin{align}
&\frac{d}{dt}\Vert\nabla u\Vert^2+\nu\Vert Su\Vert^2\leq\frac{3}{\nu}\Vert\mu\nabla\varphi\Vert^2+\frac{3}{\nu}\Vert h\Vert^2+\sigma\Vert\nabla u\Vert^2,
\label{NSregin}
\end{align}
where $\sigma(t):=c_{\nu}\Vert u\Vert^2\Vert\nabla u\Vert^2$. 
Now, recalling Remark \ref{eventbd} (see also the proof of \cite[Lemma 2.10]{GG}),
the assumption $h\in L^2_{tb}(0,\infty;G_{div})$ and the dissipative estimate \eqref{dissest},
we know that there exists a constant $C_0(m)>0$ depending on $m$, and a time $t_1=t_1(\mathcal{E}(z_0))$ depending on $\mathcal{E}(z_0)$ such that
\begin{align}
&\sup_{t\geq t_1}\Vert\varphi(t)\Vert_{L^\infty(\Omega)}\leq C_0(m).
\label{est35}
\end{align}
Therefore we have
$\sup_{t\geq t_1}\Vert\mu(t)\Vert_{L^{\infty}(\Omega)}\leq C_1(m)$.
Then, using also \eqref{est31} and \eqref{est20}, we get
\begin{align}
&\int_t^{t+1}\Vert\mu(\tau)\nabla\varphi(\tau)\Vert^2 d\tau\leq C_2(m),\qquad\int_t^{t+1}\sigma(\tau)d\tau\leq C_3(m),
\label{est22}
\end{align}
for all $t\geq t_2:=\max\{t_0,t_1\}$.
Therefore, \eqref{est21} and \eqref{est22} allow us to apply
Lemma \ref{unifGronw} to the differential inequality \eqref{NSregin}
and we deduce that
\begin{align}
&\Vert\nabla u(t)\Vert^2\leq C_4(m):=\frac{2}{\nu}\Big(2C_2(m)+2\Vert h\Vert_{L^2_{tb}(0,\infty;G_{div})}^2+\Lambda_0(m)\Big)e^{C_3(m)},
\label{est32}
\end{align}
for all $t\geq t_3:=t_2+1$. Furthermore, by integrating \eqref{NSregin} between $t$ and $t+1$, for $t\geq t_3$, we obtain
\begin{align}
&c\nu\int_t^{t+1}\Vert u(s)\Vert_{H^2(\Omega)}^2 ds\leq
C_5(m):=(1+C_3(m))C_4(m)+\frac{4}{\nu}\Big(C_2(m)+\Vert h\Vert_{L^2_{tb}(0,\infty;G_{div})}^2\Big),
\label{est33}
\end{align}
for all $t\geq t_3$, where we have also used \cite[Lemma 3.7]{T}.
Estimates \eqref{est32} and \eqref{est33} in particular imply \eqref{reg5}$_1$.

Now, let us write \eqref{sy3} in the form $u_t=-Bu-\nu Su+\mu\nabla\varphi+h$ and observe that,
owing to \cite[Lemma 3.8]{T} (or \eqref{ftril3}), we have
\begin{align}
&\int_t^{t+1}\Vert Bu(s)\Vert^4 ds\leq\int_t^{t+1}\Vert u(s)\Vert^2\Vert\nabla u(s)\Vert^4\Vert Su(s) \Vert^2 ds
\leq C_6(m):=\frac{c}{\nu\lambda_1} C_4^3(m) C_5(m),\nonumber
\end{align}
for all $t\geq t_3$, and hence
\begin{align}
&\int_t^{t+1}\Vert u_t(s)\Vert^2 ds\leq C_7(m):=c\Big(C_6^{1/2}(m)+\nu C_5(m)+C_2(m)+\Vert h\Vert_{L^2_{tb}(0,\infty;G_{div})}^2\Big),
\label{est34}
\end{align}
for all $t\geq t_3$. Note that \eqref{est34} entails \eqref{reg6}$_1$.

We are now in a position to get uniform in time regularization estimates for $\varphi_t$ first in $L^2_{tb}(\tau,\infty;H)$ and then in $L^{\infty}(\tau,\infty;H)$, for some $\tau>0$.

Let us note first that, by combining \eqref{diffid1}--\eqref{est2} and taking \eqref{est35} into account, we obtain the following differential
inequality, for all $t>t_1$,
\begin{align}
&\frac{d}{dt}\Vert\nabla\mu\Vert^2+c_0\Vert\varphi_t\Vert^2\leq(C_8(m)\Vert u\Vert_{H^2}^2+c)\Vert\nabla\mu\Vert^2
+C_9(m)\Vert u\Vert_{H^2}^2\Vert\varphi\Vert^2.
\label{est38}
\end{align}
Observe that (cf. \eqref{est33})
\begin{align}
&\int_t^{t+1}(C_8(m)\Vert u(s)\Vert_{H^2}^2+c)ds\leq C_{10}(m):=\frac{1}{c\nu}C_5(m)C_8(m)+c,\label{est36}\\
&\int_t^{t+1}C_9(m)\Vert u(s)\Vert_{H^2}^2\Vert\varphi(s)\Vert^2 ds\leq C_{11}(m):=\frac{|\Omega|}{c\nu}C_0^2(m)C_5(m)C_9(m)\label{est37},
\end{align}
for all $t\geq t_3$. Then, using \eqref{est31} and \eqref{est36}, \eqref{est37}, we can apply the uniform Gronwall lemma to
\eqref{est38} in $[t_3,\infty)$ and get
\begin{align}
&\Vert\nabla\mu(t)\Vert^2\leq C_{12}(m):=(C_{11}(m)+\Lambda_0(m))e^{C_{10}(m)},\qquad\forall t\geq t_4:=t_3+1.
\label{est39}
\end{align}
Now, by integrating \eqref{est38} between $t$ and $t+1$, for $t\geq t_4$, we also deduce
\begin{align}
&c_0\int_t^{t+1}\Vert\varphi_t(s)\Vert^2 ds\leq C_{13}(m):=(1+C_{10}(m))C_{12}(m)+C_{11}(m),\qquad\forall t\geq t_4.
\label{est40}
\end{align}
Estimates \eqref{est39} and \eqref{est40} imply, in particular, \eqref{reg5}$_2$ and \eqref{reg6}$_2$, respectively.

Let us now consider estimate \eqref{est11}. Set
$$\Phi(t):=\frac{1}{2}\int_{\Omega}(a+F''(\varphi(t)))\varphi_t^2(t),$$
and notice that, on account of \eqref{est35}, we have
\begin{align}
&\frac{c_0}{2}\Vert\varphi_t(t)\Vert^2\leq\Phi(t)\leq C_{14}(m)\Vert\varphi_t(t)\Vert^2,\qquad\forall t\geq t_1.
\label{est26}
\end{align}
Then, by arguing as in the previous section and taking \eqref{est35} into account, we easily see that \eqref{est11} can be rewritten as follows
\begin{align}
&\frac{d}{dt}\Phi(t)+\frac{1}{8}\Vert\nabla\mu_t\Vert^2\leq\omega(t)\Phi(t)+\beta(t)+C_{15}(m),\qquad\forall t\geq t_1,
\label{est27}
\end{align}
where $\omega(t):=\alpha(t)+C_{16}(m)\Phi(t)$, and $\alpha,\beta$ the same as in \eqref{est11}.
Then, by using \eqref{est26}, \eqref{est40}, \eqref{est33} and \eqref{est34}, we have
\begin{align}
&\int_t^{t+1}\Phi(s)ds\leq C_{17}(m):=\frac{1}{c_0}C_{13}(m)C_{14}(m),\\
&\int_t^{t+1}\omega(s)ds\leq C_{18}(m):=c\Big(\frac{1}{\nu}C_5(m)+C_7(m)+C_{16}(m)C_{17}(m)+1\Big),\\
&\int_t^{t+1}\beta(s)ds\leq C_{19}(m):=C_0^2(m)C_7(m),
\end{align}
for all $t\geq t_4$.
By applying once more the uniform Gronwall lemma to \eqref{est27} in the interval $[t_4,\infty)$, we deduce
\begin{align}
&\Vert\varphi_t(t)\Vert^2\leq C_{20}(m):=\frac{2}{c_0}\Big(C_{15}(m)+C_{17}(m)+C_{19}(m)\Big)e^{C_{18}(m)},
\label{est41}
\end{align}
for all $t\geq t_5:=t_4+1$.
Then, by integrating \eqref{est27} between $t$ and $t+1$, for $t\geq t_5$,
and using \eqref{est26}, \eqref{est41} and \eqref{est10} (written with a constant $C_{21}(m)$ in place of $\overline{c}$, for $t\geq t_1$,
due to \eqref{est35}), we also find
\begin{align}
&\int_t^{t+1}\Vert\nabla\varphi_t(s)\Vert^2 ds\leq C_{22}(m):=\frac{32}{c_0}\Big(C_{14}C_{20}C_{18}+C_{19}+C_{15}\Big)+\big(1+C_{20}^2\big)C_{21},
\label{est42}
\end{align}
for all $t\geq t_5$, where all $C_i$ depend on $m$. Observe that
estimates \eqref{est41} and \eqref{est42} yield \eqref{reg8}.

Furthermore, owing to \eqref{est5} and \eqref{est13},
we also have
\begin{align}
&\Vert\nabla\varphi(t)\Vert_{L^p(\Omega)^2}\leq C_{23}(m),\qquad\forall t\geq t_5,\qquad 2\leq p<\infty
\label{est45}
\end{align}
Finally, on account of \eqref{est43}, \eqref{est41} and \eqref{est32}, we obtain
\begin{align}
&\Vert\mu(t)\Vert_{H^2}\leq c\Vert-\Delta\mu(t)+\mu(t)\Vert\leq C_{24}(m):=c\big(C_1(m)+C_{20}^{1/2}(m)+C_4^{1/2}(m)C_{23}(m)\big),
\end{align}
for all $t\geq t_5$,
and recalling \eqref{est44}, provided that $J\in W^{2,1}(\mathbb{R}^2)$, we get
\begin{align}
&\Vert\varphi(t)\Vert_{H^2}\leq C_{25}(m),\qquad\forall t\geq t_5.\label{est46}
\end{align}
Estimates \eqref{est45} and \eqref{est46} yield \eqref{reg7}.
\end{proof}

Let us now recall the main result about the existence of the global attractor for weak solutions to system \eqref{sy1}--\eqref{sy5}
in the autonomous case (cf. \cite{FG1}).
Since the weak solutions to system \eqref{sy1}--\eqref{sy5} are not known to be unique but the energy identity holds,
the existence of the global attractor is achieved by using J.M. Ball's approach based on the notion of generalized semiflows
(cf. \cite{Ba}, to which we refer for the main definitions and results).

We assume that $h$ is time independent, i.e., $h\in G_{div}$, and, for $m\geq 0$ fixed, we introduce the metric space
\begin{align}
&\mathcal{X}_m:=G_{div}\times\mathcal{Y}_m,
\end{align}
where
\begin{equation}
\mathcal{Y}_m:=\{\varphi\in H\,:\, F(\varphi)\in L^1(\Omega),
\; |(\varphi,1)|\leq m\},
\end{equation}
The space $\mathcal{X}_m$ is endowed with the metric
\begin{equation*}
\mathbf{d}(z_2,z_1)=\|u_2-u_1\|+\|\varphi_2-\varphi_1\|
+\Big|\int_{\Omega}F(\varphi_2)-\int_{\Omega}F(\varphi_1)\Big|^{1/2},\quad\forall z_1,z_2\in\mathcal{X}_m,
\end{equation*}
where $z_1:=[u_1,\varphi_1]$ and $z_2:=[u_2,\varphi_2]$.

Suppose that (H1)--(H4) are satisfied and that $h\in G_{div}$. Let $\mathcal{G}_m$ be the set of all weak solutions
to system \eqref{sy1}--\eqref{sy6} from $[0,\infty)$ to $\mathcal{X}_m$ given by Theorem \ref{thm}
and corresponding to all initial data $z_0\in\mathcal{X}_m$.
Then, in \cite[Prop. 3 and Thm. 3]{FG1} it is proved that $\mathcal{G}_m$ is a generalized semiflow on $\mathcal{X}_m$ (i.e.,
$\mathcal{G}_m$ satisfies conditions (H1)--(H4) from \cite{Ba} in the space $\mathcal{X}_m$) which possesses a (unique)
global attractor $\mathcal{A}_m$.

Take $z_0\in\mathcal{X}_m$ and consider a weak solution $z:=[u,\varphi]\in C([0,\infty);\mathcal{X}_m)$ corresponding to $z_0$.
From \eqref{ei}, written with $t=\tau$, we know that for every $\tau>0$ there exists $t_\tau\in(0,\tau]$
such that $z(t_\tau)\in V_{div}\times V$. Thanks to Remark \ref{eventbd}, we can also assume that $\varphi(t_\tau)\in L^\infty(\Omega)$.
We can therefore write the differential inequality \eqref{est38} in $[t_\tau,\infty)$ and, by integrating \eqref{est38} between $t_\tau$ and $t>t_\tau$,
we can see that there exists $s_\tau\in(t_\tau,t]$ such that $\varphi_t(s_\tau)\in H$ and hence $\varphi(s_\tau)\in H^2(\Omega)$.
Summing up, introducing the (complete) metric space
\begin{align}
&\mathcal{X}_m^1:=V_{div}\times\mathcal{Y}_m^1,\qquad\mathcal{Y}_m^1:=\{\varphi\in H^2(\Omega):\:\:|(\varphi,1)|\leq m\},
\end{align}
endowed with the metric
\begin{equation*}
\mathbf{d_1}(z_2,z_1)=\|\nabla u_2-\nabla u_1\|+\|\varphi_2-\varphi_1\|_{H^2(\Omega)},
\quad\forall z_1,z_2\in\mathcal{X}_m^1,
\end{equation*}
then, for every $\tau>0$, there exists $s_\tau\in(0,\tau]$ such that $z(s_\tau)\in\mathcal{X}_m^1$
and starting from the time $s_\tau$ the weak solution corresponding to $z_0$ becomes a (unique) strong solution $z\in C([s_\tau,\infty);\mathcal{X}_m^1)$ (cf. Remarks \ref{timereg} and \ref{timereg2}).
Such a solution satisfies the dissipative estimate \eqref{secdissest} in $[s_\tau,\infty)$.
Let us consider a bounded in $\mathcal{X}_m$ subset $B\subset\mathcal{X}_m$. Choosing $\tau=1$ for every $z_0\in B$, then
every weak solution $z$ starting from $z_0\in B$ becomes (at a certain time $s_1\in(0,1]$ depending on $z_0$ and on the weak
solution considered from $z_0$) a strong solution satisfying \eqref{secdissest} in $[1,\infty)$.
We therefore deduce that there exists a time $t^\ast=t^\ast(B)\geq 1$ such that
\begin{align}
& z(t)\in\mathcal{B}_1(\Lambda_1(m)), \qquad\forall t\geq t^\ast,
\label{est48}
\end{align}
where $\mathcal{B}_1(\Lambda_1(m))$ is the closed ball in $\mathcal{X}_m^1$ given by
$$\mathcal{B}_1(\Lambda_1(m)):=\{w\in \mathcal{X}_m^1:\:\:\mathbf{d_1}(w,0)\leq\Lambda_1(m)\}.$$
This fact immediately implies that $\mathcal{A}_m\subset\mathcal{B}_1$. Indeed, we have
$\mbox{dist}_{\mathcal{X}_m^1}(T(t)\mathcal{A}_m,\mathcal{B}_1)=\mbox{dist}_{\mathcal{X}_m^1}(\mathcal{A}_m,\mathcal{B}_1)=0$,
which implies $\mathcal{A}_m\subset\overline{\mathcal{B}_1}^{\mathcal{X}_m^1}=\mathcal{B}_1$.
We recall that the multivalued evolution map $T(t)$ is defined, for every $t\geq 0$
and every subset $E\subset\mathcal{X}_m$, as (cf. \cite{Ba})
\begin{align}
&T(t)E:=\{z(t):\:\: z\in\mathcal{G}_m,\:\: z(0)\in E\}.
\end{align}
Summing up we have just proven the following regularity result for the global attractor
\begin{thm}
Let (H1)--(H4) be satisfied and assume that $h\in G_{div}$ is independent of time.
Then the global attractor $\mathcal{A}_m$ of the generalized semiflow $\mathcal{G}_m$
associated with system \eqref{sy1}--\eqref{sy5} is such that
$$ \mathcal{A}_m\subset\mathcal{B}_1(\Lambda_1(m)).$$
\end{thm}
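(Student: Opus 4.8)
The plan is to deduce the inclusion from two facts already at our disposal: first, that every weak solution emanating from $\mathcal{X}_m$ regularizes into a strong solution in an arbitrarily short time, and second, that such strong solutions obey the uniform dissipative bound \eqref{secdissest}, in which $\Lambda_1(m)$ depends only on $m$ (and on $F$, $J$, $\Omega$, $\nu$). Combining these with the invariance of the attractor under the multivalued evolution map $T(t)$ will force $\mathcal{A}_m$ to lie inside the closed ball $\mathcal{B}_1(\Lambda_1(m))$.

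First I would establish the eventual smoothing. Fix $z_0\in\mathcal{X}_m$ and a weak solution $z=[u,\varphi]\in C([0,\infty);\mathcal{X}_m)$ issued from it. From the energy inequality \eqref{ei} one finds, for any $\tau>0$, a time $t_\tau\in(0,\tau]$ at which $z(t_\tau)\in V_{div}\times V$; invoking Remark \ref{eventbd} we may also arrange that $\varphi(t_\tau)\in L^\infty(\Omega)$. Writing the differential inequality \eqref{est38} on $[t_\tau,\infty)$ and integrating it, we can then locate $s_\tau\in(t_\tau,t]$ with $\varphi_t(s_\tau)\in H$, whence $\varphi(s_\tau)\in H^2(\Omega)$ by comparison in \eqref{sy1}. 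Thus $z(s_\tau)\in\mathcal{X}_m^1$, and from the time $s_\tau$ onward the weak solution coincides with the unique strong solution furnished by Theorem \ref{main}. By the Proposition, this strong solution satisfies \eqref{secdissest} on $[s_\tau,\infty)$, the bound $\Lambda_1(m)$ being independent of the particular datum.

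Next I would upgrade this to a statement uniform over bounded sets. Let $B\subset\mathcal{X}_m$ be bounded, so that $\mathcal{E}(z_0)$ is bounded for $z_0\in B$. Taking $\tau=1$, every weak solution starting in $B$ becomes strong at some $s_1\in(0,1]$ (depending on $z_0$ and on the chosen trajectory) and thereafter satisfies \eqref{secdissest}; since the threshold time appearing there is controlled by $\mathcal{E}(z_0)$, there is a single $t^\ast=t^\ast(B)\ge 1$ with $z(t)\in\mathcal{B}_1(\Lambda_1(m))$ for all $t\ge t^\ast$, which is exactly \eqref{est48}. Applying this to the attractor itself, which is bounded in $\mathcal{X}_m$ and invariant under $T(t)$, we obtain $\mathrm{dist}_{\mathcal{X}_m^1}(\mathcal{A}_m,\mathcal{B}_1)=\mathrm{dist}_{\mathcal{X}_m^1}(T(t)\mathcal{A}_m,\mathcal{B}_1)=0$ for $t\ge t^\ast$; since $\mathcal{B}_1$ is closed in $\mathcal{X}_m^1$, this yields $\mathcal{A}_m\subset\overline{\mathcal{B}_1}^{\mathcal{X}_m^1}=\mathcal{B}_1(\Lambda_1(m))$.

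I expect the principal obstacle to be bookkeeping the non-uniqueness inherent in Ball's generalized semiflow framework (cf. \cite{Ba}). The regularization time $s_\tau$ depends not only on $z_0$ but on the individual weak trajectory selected, so one must argue that the \emph{bound} $\Lambda_1(m)$ after regularization, together with the absorbing time $t^\ast(B)$, can be made independent of this choice; this is precisely where the uniformity of \eqref{secdissest} in the initial datum (through $m$ and $\mathcal{E}(z_0)$) is essential. The remaining delicate point is ensuring that the invariance identity $T(t)\mathcal{A}_m=\mathcal{A}_m$, which holds in $\mathcal{X}_m$, can legitimately be combined with distances measured in the stronger topology of $\mathcal{X}_m^1$; the closedness of $\mathcal{B}_1(\Lambda_1(m))$ in $\mathcal{X}_m^1$ is what makes this transfer rigorous.
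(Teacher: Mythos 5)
Your proposal is correct and follows essentially the same route as the paper: regularization of any weak trajectory into a strong solution via the energy inequality, Remark \ref{eventbd} and the differential inequality \eqref{est38}, then the uniform dissipative estimate \eqref{secdissest} to obtain the absorbing ball $\mathcal{B}_1(\Lambda_1(m))$, and finally the invariance of $\mathcal{A}_m$ under $T(t)$ combined with the closedness of $\mathcal{B}_1(\Lambda_1(m))$ in $\mathcal{X}_m^1$. The points you flag as delicate (dependence of the regularization time on the chosen trajectory, and transferring the invariance identity to the $\mathcal{X}_m^1$ metric) are exactly the ones the paper handles in the same way.
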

Thus the global attractor is the union of all the
bounded complete trajectories which are strong solutions to \eqref{sy1}-\eqref{sy6}.

\section{Convergence to equilibria}\setcounter{equation}{0}
 In this section we shall prove that every weak solution to system \eqref{sy1}--\eqref{sy6}
 converges to a stationary solution as $t\to\infty$, provided that
 $F$ is real analytic and $h\equiv 0$.

Let us first consider the set of all stationary
solutions $z_\infty$ to system \eqref{sy1}--\eqref{sy5}, namely the set of
pairs
$z_\infty:=[0,\varphi_\infty]\in\mathcal{X}_m$ (for some $m\geq
0$), where $\varphi_\infty$ solves the integral equation
\begin{align}
&a\varphi_\infty-J\ast\varphi_\infty+F'(\varphi_\infty)=\mu_\infty,
\label{co13}
\end{align}
with some constant $\mu_\infty\in\mathbb{R}$ given necessarily by
$\mu_\infty=\overline{F'(\varphi_\infty)}$. Therefore we introduce
\begin{eqnarray}
\mathcal{E}_m
&=&\Big\{z_\infty=[0,\varphi_\infty]:\quad\varphi_\infty\in
H,\quad F(\varphi_\infty)\in
L^1(\Omega),\quad |\overline{\varphi_\infty}|\leq m,\nonumber\\
&
&\quad a\varphi_\infty-J\ast\varphi_\infty+F'(\varphi_\infty)-\overline{F'(\varphi_\infty)}=0\quad\mbox{a.e.
in }\Omega\Big\}.
\end{eqnarray}
We point out that, by using an easy iteration argument
from \eqref{co13}, on account that $F'$ has polynomial growth,
we can deduce that $\varphi_\infty\in L^\infty(\Omega)$.
The structure of the stationary set is rather complicated.
In particular, such a set may be a continuum (see \cite{FIP} for an example
and \cite{H} where the author proves the existence of solutions $\varphi_\infty$ to \eqref{co3} with $\overline{\varphi}_\infty=0$ in cylindrical bounded domains).
It is also worth observing that to every stationary solution
$z_\infty=[0,\varphi_\infty]$ there corresponds a stationary
pressure $\pi_\infty$ given by
$\pi_\infty=\overline{F'(\varphi_\infty)}\varphi_\infty+c$,
where $c\in\mathbb{R}$ is an arbitrary constant (cf. \eqref{sy3}).

We begin with the following preliminary but crucial result.

\begin{lem}
\label{omegalim}
Assume that (H1)--(H4) are satisfied. Take $z_0\in\mathcal{X}_m$
and let $z\in C([0,\infty);\mathcal{X}_m)$ be a weak
solution corresponding to $z_0$. Then, we have
\begin{align}
&\emptyset \neq\omega(z)\subset\mathcal{E}_m \label{co00}
\end{align}
and
\begin{align}
& u(t)\to 0\qquad\mbox{in }\:\:G_{div},\qquad\mbox{as
}\:\:t\to\infty. \label{co0}
\end{align}
Furthermore, there exists a time $t^\ast=t^\ast(z_0)$ depending
on $z_0$ such that the trajectory $\bigcup_{t\geq t^\ast}\{z(t)\}$
is precompact in $\mathcal{X}_m$.

\end{lem}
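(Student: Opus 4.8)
The plan is to establish Lemma \ref{omegalim} in three coordinated stages: first the decay of the velocity, then the precompactness of the trajectory, and finally the characterization of the $\omega$-limit set as a subset of the stationary solutions. The three parts are interlocking, because precompactness is what makes the $\omega$-limit set nonempty, and the energy identity is what drives both the velocity decay and the structure of the limit.

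\textbf{Step 1 (velocity decay).} I would start from the energy identity \eqref{eniden} with $h\equiv 0$, which reads $\frac{d}{dt}\mathcal{E}(u,\varphi)+\nu\Vert\nabla u\Vert^2+\Vert\nabla\mu\Vert^2=0$. Since the dissipative estimate \eqref{dissest} bounds $\mathcal{E}$ from below (via (H2), $F$ is bounded below) and from above, integrating in time shows that $\nabla u\in L^2(0,\infty;H)$ and $\nabla\mu\in L^2(0,\infty;H)$, hence by Poincar\'e $u\in L^2(0,\infty;V_{div})$. To upgrade this to the pointwise decay \eqref{co0}, the natural tool is Lemma \ref{prellem}: I need $u\in L^2(0,\infty;G_{div})$ together with $u_t\in L^{p_2}_{tb}(0,\infty;G_{div}')$ (or in a weaker space) for some $p_2>1$, applied with $X=G_{div}$ or a suitable reflexive space into which $V_{div}$ embeds. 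The velocity equation \eqref{sy3} gives $u_t$ in terms of $-\nu Su$, $-Bu$, and $\mu\nabla\varphi$; using the two-dimensional estimates and the uniform-in-time bounds already secured (via the Proposition and Remark \ref{eventbd}, which guarantee eventual boundedness of $\varphi$ and hence of $\mu\nabla\varphi$), I expect to control $u_t$ in a translation-bounded space on $[\tau,\infty)$ for $\tau$ large. Combining with $u\in L^2(\tau,\infty;V_{div})\hookrightarrow L^2(\tau,\infty;G_{div})$, Lemma \ref{prellem} yields $u(t)\to 0$ in $G_{div}$.

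\textbf{Step 2 (precompactness).} Here I would invoke the regularization results already proven. By the discussion following the Theorem on the global attractor (and Remark \ref{eventbd}), after some time $t^\ast=t^\ast(z_0)$ the weak solution becomes a strong solution and satisfies the uniform bounds \eqref{secdissest}, i.e. $\Vert\nabla u(t)\Vert+\Vert\varphi(t)\Vert_{H^2(\Omega)}\leq\Lambda_1(m)$ for all $t\geq t^\ast$. Thus $\bigcup_{t\geq t^\ast}\{z(t)\}$ is bounded in $V_{div}\times H^2(\Omega)$, and since the embeddings $V_{div}\hookrightarrow G_{div}$ and $H^2(\Omega)\hookrightarrow H$ are compact, this trajectory is precompact in $\mathcal{X}_m=G_{div}\times\mathcal{Y}_m$ (continuity of $\int_\Omega F(\varphi)$ along the convergence, needed for the third term of the metric $\mathbf{d}$, follows from the uniform $L^\infty$ bound on $\varphi$ and the local Lipschitz continuity of $F$). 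This precompactness immediately gives $\omega(z)\neq\emptyset$.

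\textbf{Step 3 (structure of the $\omega$-limit set), the main obstacle.} The delicate part is showing $\omega(z)\subset\mathcal{E}_m$. Let $z_\infty=[u_\infty,\varphi_\infty]$ be an $\omega$-limit point, so $z(t_n)\to z_\infty$ along some $t_n\to\infty$. From Step 1, $u_\infty=0$. To identify $\varphi_\infty$ as a solution of the stationary equation \eqref{co13}, the standard argument is to integrate the energy identity: since $\mathcal{E}(z(t))$ is nonincreasing and bounded below, it converges to a limit $\mathcal{E}_\infty$, whence $\int_0^\infty\Vert\nabla\mu\Vert^2<\infty$ forces a sequence $s_n\to\infty$ with $\nabla\mu(s_n)\to 0$ in $H$; combined with the uniform higher regularity this should give $\nabla\mu(t)\to 0$ along the full sequence $t_n$ (after possibly passing to the shifted times where $\varphi$ converges), so $\mu(t_n)\to\mu_\infty$ with $\mu_\infty$ constant in space. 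Passing to the limit in \eqref{sy2} then yields $a\varphi_\infty-J\ast\varphi_\infty+F'(\varphi_\infty)=\mu_\infty$ a.e., and averaging fixes $\mu_\infty=\overline{F'(\varphi_\infty)}$; the conservation of mass $\overline{\varphi}(t)=\overline{\varphi}_0$ gives $|\overline{\varphi_\infty}|\leq m$. The subtle point I expect to require care is the rigorous passage to the limit in the nonlinear convolution and potential terms: this needs strong convergence $\varphi(t_n)\to\varphi_\infty$ in a topology strong enough to control $F'(\varphi(t_n))$, which is exactly what the compactness in Step 2 (strong convergence in $H$, plus uniform $L^\infty$ and $H^2$ bounds allowing interpolation to strong convergence in, say, $V$) provides. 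Hence every $\omega$-limit point lies in $\mathcal{E}_m$, completing the proof.
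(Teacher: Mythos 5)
Your Steps 1 and 2 follow the paper's route almost exactly: the energy identity gives $u\in L^2(0,\infty;V_{div})$ and $\nabla\mu\in L^2(0,\infty;H)$, the momentum equation together with the eventual $L^\infty$ bound on $\varphi$ controls $u_t$ in a translation-bounded space so that Lemma \ref{prellem} yields \eqref{co0}, and precompactness (hence nonemptiness of $\omega(z)$) comes from the eventual absorption \eqref{est48} into a bounded set of $V_{div}\times H^2(\Omega)$ plus compact embedding and the uniform $L^\infty$ bound for the $\int_\Omega F(\varphi)$ term of the metric. All of that is sound and is what the paper does.

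The gap is in Step 3, at the point where you pass from ``$\nabla\mu(s_n)\to 0$ along \emph{some} sequence $s_n$'' to ``$\nabla\mu(t_n)\to 0$ along the \emph{given} sequence $t_n$''. Uniform spatial regularity does not do this: boundedness of $\mu(t)$ in $H^2(\Omega)$ gives precompactness of $\{\nabla\mu(t)\}$ but says nothing about which limit is attained along $t_n$; and you are not free to ``pass to shifted times where $\varphi$ converges'', because to prove $\omega(z)\subset\mathcal{E}_m$ you must treat an arbitrary convergent sequence $z(t_n)$, not a sequence of your choosing (otherwise you only get $\omega(z)\cap\mathcal{E}_m\neq\emptyset$). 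What is actually needed is \emph{equicontinuity in time} of $t\mapsto\nabla\mu(t)$, i.e.\ a bound on the time derivative: the paper integrates the differential inequality \eqref{est27} over $[t,t+1]$ to obtain $\nabla\mu_t\in L^2_{tb}(\tau,\infty;H)$, and then applies Lemma \ref{prellem} a second time, now to $f=\nabla\mu$ with $X=H$ (using $\nabla\mu\in L^2(0,\infty;H)$ from the energy identity), to conclude the full convergence $\nabla\mu(t)\to 0$ in $H$ as $t\to\infty$ (this is \eqref{co4}). Combined with the a.e.\ convergence $\mu(t_n)\to\widetilde\mu$ extracted from the strong $H$-convergence of $\varphi(t_n)$, this forces $\widetilde\mu$ to be constant, and the rest of your identification of $\varphi_\infty$ as a solution of \eqref{co13} (with $F(\varphi_\infty)\in L^1$ by Fatou) goes through as you describe. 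So the missing ingredient is precisely the translation-bounded estimate on $\nabla\mu_t$; once you name it, your argument closes and coincides with the paper's.
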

\begin{proof}
From \eqref{ei}, by letting $t\to\infty$, we obtain that
\begin{align}
& u\in L^2(0,\infty;V_{div}). \label{co1}
\end{align}
On the other hand, from \eqref{sy3}, written as $u_t=-Bu-\nu Su+\mu\nabla\varphi$, we
get
$$\Vert u_t\Vert_{V_{div}'}\leq\nu\Vert\nabla u\Vert+c\Vert
u\Vert\Vert\nabla
u\Vert+\Vert\varphi\Vert_{L^\infty(\Omega)}\Vert\nabla\mu\Vert.$$
Now, \eqref{ei} also implies that $u\in
L^\infty(0,\infty;G_{div})$ and that $\nabla\mu\in
L^2(0,\infty;H)$. Hence, on account of \eqref{est35} as well, from
the previous estimate we infer that
\begin{align}
& u_t\in L^2(\tau,\infty;V_{div}'), \label{co2}
\end{align}
for some $\tau>0$. From \eqref{co1} and \eqref{co2} we deduce
\eqref{co0}. Let us now take $\widetilde{z}\in\omega(z_0)$
arbitrary, with
$\widetilde{z}:=[\widetilde{u},\widetilde{\varphi}]$. Then, there
exists a sequence $\{t_n\}$ with $t_n\to\infty$ such that
$u(t_n)\to\widetilde{u}$ in $G_{div}$ and
$\varphi(t_n)\to\widetilde{\varphi}$ in $H$. We get
$\widetilde{u}=0$ and, up to a subsequence,
\begin{align}
&\mu(t_n)\to\widetilde{\mu},\qquad\mbox{a.e. in }\:\:\Omega,
\label{co3}
\end{align}
where
$\widetilde{\mu}:=a\widetilde{\varphi}-J\ast\widetilde{\varphi}+F'(\widetilde{\varphi})$.
By integrating \eqref{est27} between $t$ and $t+1$ we easily
deduce that $\nabla\mu_t\in L^2_{tb}(\tau,\infty;H)$ for some
$\tau>0$. Since we also have $\nabla\mu\in L^2(0,\infty;H)$, then
Lemma \ref{prellem} yields
\begin{align}
&\nabla\mu(t)\to 0\qquad\mbox{in }\:\: H,\qquad\mbox{as
}\:\:t\to\infty. \label{co4}
\end{align}
From \eqref{co3} and \eqref{co4} we easily deduce that
$\widetilde{\mu}$=const almost everywhere in $\Omega$, where the constant is
necessarily given by $\overline{F'(\widetilde{\varphi})}$.
Therefore
$\widetilde{z}=[\widetilde{u},\widetilde{\varphi}]=[0,\widetilde{\varphi}]\in\mathcal{E}_m$
(note that $F(\widetilde{\varphi})\in L^1(\Omega)$ is ensured by Fatou's lemma),
and \eqref{co00} is proven. Finally, the precompactness of the trajectory
is an immediate consequence of \eqref{est48}.
\end{proof}

\begin{oss}
{\upshape Lemma \ref{omegalim} yields in particular an existence result for equation \eqref{co13}.}
\end{oss}

We now recall the generalized \L ojasiewicz-Simon inequality established in \cite{GaGr}
which is the main tool for proving our convergence result.

Let $V$ and $W$ be Banach spaces embedded into a Hilbert space $H$
and its dual $H'$, respectively, with dense and continuous
injections. Assume that the restriction of the Riesz map
$R\in\mathcal{L}(H,H')$ to $V$ is an isomorphism from $V$ onto
$W=R(V)$. Moreover, let $H=H_0+H_1$, where $H_1\subset V$ is a
finite-dimensional subspace and $H_0$ is its orthogonal complement
in $H$. Introduce the subspace of $H'$
$$H_0^0:=\big\{g\in H':\:\:\langle g,\varphi\rangle=0\:\:\mbox{for all
}\varphi\in H_0\big\}.$$
Then let
 $$\mathcal{F}:=\mathcal{G}_1+\mathcal{G}_2,$$
 where the functionals $\mathcal{G}_1$ and $\mathcal{G}_2$
 satisfy the following conditions
 \begin{itemize}
\item $\mathcal{G}_1:U\subset V\to\mathbb{R}$ is Fr\'{e}chet
differentiable on an open set $U$ such that the Fr\'{e}chet
derivative $D\mathcal{G}_1:U\to W$ is a real analytic operator
which satisfies
\begin{align}
&\langle
D\mathcal{G}_1(\varphi_2)-D\mathcal{G}_1(\varphi_1),\varphi_2-\varphi_1\rangle\geq\alpha_1\Vert
\varphi_2-\varphi_1\Vert_H^2,\label{co11}\\
&\Vert
D\mathcal{G}_1(\varphi_2)-D\mathcal{G}_1(\varphi_1)\Vert_{H'}\leq\alpha_2\Vert
\varphi_2-\varphi_1\Vert_H,\label{co12}
\end{align}
for all $\varphi_1,\varphi_2\in U$ and for some constants
$\alpha_1,\alpha_2>0$. Furthermore, the second Fr\'{e}chet
derivative $D^2\mathcal{G}_1(\varphi)\in\mathcal{L}(V,W)$ is
assumed to be an isomorphism for all $\varphi\in U$.

\item $\mathcal{G}_2:H\to\mathbb{R}$ is assumed to be in the form
$$\mathcal{G}_2(\varphi)=\frac{1}{2}\langle \mathcal{K}\varphi,\varphi\rangle+\langle
l,\varphi\rangle+\rho,\qquad\forall\varphi\in H,$$
 where $\mathcal{K}\in\mathcal{L}(H,H')$ is a self-adjoint compact operator
 such that its restriction to $V$ is a compact operator in
 $\mathcal{L}(V,W)$ and $l\in W$, $\rho\in\mathbb{R}$ are given.
 \end{itemize}

The inequality we need is given by

 \begin{lem}[\cite{GaGr}]
  \label{LS}
Let the previous assumptions be satisfied for the spaces
$V,W,H,H'$ and for the functional $\mathcal{F}$.
 Let $[\varphi_\infty,\mu_\infty]\in U\times H_0^0$ satisfy
 $D\mathcal{F}(\varphi_\infty)=\mu_\infty$. Then, there exist
 $\sigma,\lambda>0$ and $\theta\in(0,1/2]$ such that
 the following inequality holds
 \begin{align}
&|\mathcal{F}(\varphi)-\mathcal{F}(\varphi_\infty)|^{1-\theta}
\leq\lambda\inf\big\{\Vert
D\mathcal{F}(\varphi)-\mu\Vert_{H'},\:\:\mu\in H_0^0\big\},
 \end{align}
for all $\varphi\in U$ satisfying $\varphi-\varphi_\infty\in H_0$
and $\Vert\varphi-\varphi_\infty\Vert_H\leq\sigma$.

 \end{lem}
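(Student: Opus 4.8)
The plan is to reduce the infinite-dimensional problem to a finite-dimensional one by a Lyapunov--Schmidt procedure and then to invoke the classical \L ojasiewicz inequality for real analytic functions on $\mathbb{R}^N$. First I would restrict the analysis to the affine subspace $\varphi_\infty+H_0$. Since $D\mathcal{F}(\varphi_\infty)=\mu_\infty\in H_0^0$, for every $v\in H_0$ we have $\langle D\mathcal{F}(\varphi_\infty),v\rangle=\langle\mu_\infty,v\rangle=0$, so that $\varphi_\infty$ is a genuine critical point of the restriction of $\mathcal{F}$ to $\varphi_\infty+H_0$; moreover the quantity $\inf\{\Vert D\mathcal{F}(\varphi)-\mu\Vert_{H'}:\mu\in H_0^0\}$ on the right-hand side is precisely the quotient norm of $D\mathcal{F}(\varphi)$ in $H'/H_0^0$, i.e. the norm of the gradient of this restricted functional. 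Writing $M:=D\mathcal{F}=D\mathcal{G}_1+\mathcal{K}(\cdot)+l$ and $L:=D^2\mathcal{F}(\varphi_\infty)=D^2\mathcal{G}_1(\varphi_\infty)+\mathcal{K}$, the standing assumptions guarantee that $M$ is real analytic near $\varphi_\infty$ and that $L$, being the sum of the isomorphism $D^2\mathcal{G}_1(\varphi_\infty):V\to W$ and the compact operator $\mathcal{K}$, is a self-adjoint Fredholm operator of index zero, so that $\mathcal{Z}:=\ker L$ is finite-dimensional.

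Next I would carry out the reduction. Let $P$ denote the orthogonal projection of $H$ onto $\mathcal{Z}$ and write $\varphi=\varphi_\infty+\xi+\eta$ with $\xi\in\mathcal{Z}$ and $\eta$ in the complementary space. Applying $I-P$ to $M(\varphi)=\mu$ and observing that $D_\eta\big[(I-P)M\big](\varphi_\infty)=(I-P)L$ is an isomorphism from the complement of $\mathcal{Z}$ onto the range of $L$, the analytic implicit function theorem furnishes a real analytic map $\xi\mapsto\eta=g(\xi)$, defined for small $\xi$, solving $(I-P)M(\varphi_\infty+\xi+g(\xi))=0$. I would then introduce the reduced functional $\Gamma(\xi):=\mathcal{F}(\varphi_\infty+\xi+g(\xi))$, which is real analytic on a neighborhood of $0$ in the finite-dimensional space $\mathcal{Z}$; a direct computation using the chain rule and the identity $(I-P)M(\varphi_\infty+\xi+g(\xi))=0$ shows that the gradient of $\Gamma$ equals, up to the Riesz identification, the component $P\,M(\varphi_\infty+\xi+g(\xi))$. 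The finite-dimensional \L ojasiewicz inequality then provides $\theta\in(0,1/2]$ and a constant $C$ with $|\Gamma(\xi)-\Gamma(0)|^{1-\theta}\le C\,|\nabla\Gamma(\xi)|$ for $\xi$ near $0$.

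Finally I would transfer the estimate back to the full problem. For $\varphi$ close to $\varphi_\infty$ in $H$, I split $\varphi=\varphi_\infty+\xi+\eta$ and control the transversal error $\eta-g(\xi)$ by $\Vert M(\varphi)\Vert_{H'}$, using the strong monotonicity \eqref{co11} (which makes $L$ coercive on the complement of $\mathcal{Z}$) together with the Lipschitz bound \eqref{co12}; the same ingredients yield $|\mathcal{F}(\varphi)-\Gamma(\xi)|\le C\Vert\eta-g(\xi)\Vert_H^2\le C\Vert M(\varphi)\Vert_{H'}^2$ and $|\nabla\Gamma(\xi)|\le C\Vert M(\varphi)\Vert_{H'}$. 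Combining these with the finite-dimensional inequality and exploiting $2(1-\theta)\ge 1$ (so that, for small norms, a squared quantity is dominated by the quantity itself) gives $|\mathcal{F}(\varphi)-\mathcal{F}(\varphi_\infty)|^{1-\theta}\le\lambda\Vert M(\varphi)\Vert_{H'}$, and passing to the infimum over $\mu\in H_0^0$ produces the asserted inequality. I expect the main obstacle to be the Lyapunov--Schmidt step: one must verify carefully that the hypotheses of the \emph{analytic} implicit function theorem hold in the scale $V\hookrightarrow H\hookrightarrow H'$, so that $\Gamma$ is genuinely real analytic and the classical \L ojasiewicz inequality applies, and one must keep track of the finite-dimensional constraint encoded by $H_0^0$ while ensuring that the coercivity of $L$ on $\mathcal{Z}^\perp$ survives the reduction.
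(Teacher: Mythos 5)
The paper does not prove Lemma \ref{LS} at all: it is imported verbatim from \cite{GaGr}, so there is no internal proof to compare against. Your sketch --- Lyapunov--Schmidt reduction along the finite-dimensional kernel of $D^2\mathcal{G}_1(\varphi_\infty)+\mathcal{K}$ (an isomorphism plus a compact perturbation, hence Fredholm of index zero), the analytic implicit function theorem, the classical \L ojasiewicz inequality for the reduced analytic functional, and the transfer estimates using \eqref{co11}--\eqref{co12} together with $2(1-\theta)\geq 1$ --- is precisely the strategy of the cited reference and is correct in its essentials; the only points deserving extra care, which you already flag, are carrying the mean-value constraint ($H_0$, $H_0^0$) through the reduction and noting that \eqref{co11} yields a lower bound $\Vert L h\Vert_{H'}\geq c\Vert h\Vert_H$ on the orthogonal complement of the kernel (via ``coercive plus compact'', so $0$ is an isolated eigenvalue) rather than form-coercivity of $L$ itself.
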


We can now state the main result of this section.
\begin{thm}
Assume that (H1)--(H4) are satisfied with $F$ real analytic. Take $z_0\in\mathcal{X}_m$
and let $z\in C([0,\infty);\mathcal{X}_m)$ be a weak
solution corresponding to $z_0$.
Then, there exists
$z_\infty:=[0,\varphi_\infty]\in\mathcal{E}_m$ with
$\overline{\varphi}_\infty=\overline{\varphi}_0$ such that
\begin{align}
&z(t)\to z_\infty\quad\mbox{ in }\:\:\mathcal{X}_m,
 \qquad\mbox{as }\:\:t\to\infty.
\end{align}
Moreover, there exist some constants $\overline{\gamma}\geq 0$, $\theta\in[0,1/2)$ and a time
$\overline{t}>0$ which depend on $z_0$ and $z_\infty$ (and on the weak solution $z$ originated from
$z_0$) such that
\begin{align}
&\Vert u(t)\Vert_{V_{div}'}+\Vert\varphi(t)-\varphi_\infty\Vert_{V'}
\leq\overline{\gamma}\: t^{-\frac{\theta}{1-2\theta}},\qquad\forall t>\overline{t}.
\label{convrate}
\end{align}

\end{thm}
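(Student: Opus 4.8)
The plan is to run a \L ojasiewicz--Simon argument for the reduced energy functional governing the order parameter. Since $u(t)\to 0$ in $G_{div}$ by Lemma~\ref{omegalim}, the only surviving part of the energy is
\begin{equation*}
E(\varphi):=\frac14\int_\Omega\int_\Omega J(x-y)(\varphi(x)-\varphi(y))^2\,dx\,dy+\int_\Omega F(\varphi),
\end{equation*}
so that $\mathcal{E}(u,\varphi)=\tfrac12\Vert u\Vert^2+E(\varphi)$, and a direct computation using $J(x)=J(-x)$ shows that $\mu=a\varphi-J\ast\varphi+F'(\varphi)$ is precisely the $L^2$-gradient of $E$. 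First I would cast $E$ in the abstract form $\mathcal{F}=\mathcal{G}_1+\mathcal{G}_2$ of Lemma~\ref{LS} with $H=L^2(\Omega)$, $H_1$ the constants and $H_0=\{\overline\varphi=0\}$ (the admissible directions, since the mean $\overline{\varphi(t)}=\overline\varphi_0$ is conserved), by setting $\mathcal{G}_1(\varphi)=\tfrac12\int_\Omega a\varphi^2+\int_\Omega F(\varphi)$ and $\mathcal{G}_2(\varphi)=-\tfrac12(J\ast\varphi,\varphi)$. Assumption (H2) makes $D^2\mathcal{G}_1=a+F''(\varphi)$ bounded below by $c_0$, hence an isomorphism (the upper bound coming from the $L^\infty$-estimate \eqref{est35} on $\varphi$), the real analyticity of $F$ makes $D\mathcal{G}_1$ real analytic, and $J\in W^{1,1}$ makes $\mathcal{K}\varphi=-J\ast\varphi$ compact, so the structural hypotheses are met. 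Note that $H_0^0$ consists of the constants, so the infimum in Lemma~\ref{LS} produces $\Vert\mu-\overline\mu\Vert\leq c\Vert\nabla\mu\Vert$ by Poincar\'e--Wirtinger.

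Next I would exploit Lemma~\ref{omegalim}: $\emptyset\neq\omega(z)\subset\mathcal{E}_m$, $u(t)\to 0$, and the trajectory is precompact in $\mathcal{X}_m$ for $t\geq t^\ast$. From the energy identity \eqref{eniden} with $h\equiv 0$, the map $t\mapsto\mathcal{E}(t)$ is nonincreasing and bounded below, hence converges to some $E_\infty$; by continuity of $E$ and precompactness, $E\equiv E_\infty$ on $\omega(z)$ and every limit point solves \eqref{co13}. Since $\omega(z)$ is compact in $H$ and all its points share the mean $\overline\varphi_0$, I would cover it by finitely many balls on which Lemma~\ref{LS} applies and, taking the smallest exponent $\theta\in(0,1/2]$ and the largest constant $\lambda$, obtain a uniform inequality
\begin{equation*}
\vert E(\varphi)-E_\infty\vert^{1-\theta}\leq\lambda\,\Vert\mu-\overline\mu\Vert\leq c\,\Vert\nabla\mu\Vert
\end{equation*}
valid on an $H$-neighbourhood $\mathcal{U}$ of $\omega(z)$. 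Because $\mathrm{dist}_H(\varphi(t),\omega(z))\to 0$ for a precompact trajectory, $\varphi(t)\in\mathcal{U}$ for all $t$ large.

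I would then derive the key differential inequality. Writing $\mathcal{E}-E_\infty=\tfrac12\Vert u\Vert^2+(E(\varphi)-E_\infty)$ and using $\Vert u\Vert\leq c\Vert\nabla u\Vert$ together with $2(1-\theta)\geq 1$ and $\Vert\nabla u\Vert\leq 1$ for $t$ large, the bound above gives $(\mathcal{E}-E_\infty)^{1-\theta}\leq c(\Vert\nabla u\Vert+\Vert\nabla\mu\Vert)\leq c\big(\nu\Vert\nabla u\Vert^2+\Vert\nabla\mu\Vert^2\big)^{1/2}=c(-\tfrac{d}{dt}\mathcal{E})^{1/2}$. Consequently, for $t$ large,
\begin{equation*}
-\frac{d}{dt}(\mathcal{E}-E_\infty)^{\theta}=\theta(\mathcal{E}-E_\infty)^{\theta-1}\Big({-}\frac{d}{dt}\mathcal{E}\Big)\geq c\big(\Vert\nabla u\Vert+\Vert\nabla\mu\Vert\big),
\end{equation*}
so $\int^\infty(\Vert\nabla u\Vert+\Vert\nabla\mu\Vert)\,dt<\infty$. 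Rewriting \eqref{sy1} as $\varphi_t=\Delta\mu-\mathrm{div}(\varphi u)$ and using the $L^\infty$-bound on $\varphi$ yields $\Vert\varphi_t\Vert_{V'}\leq c(\Vert\nabla\mu\Vert+\Vert\nabla u\Vert)$, whence $\varphi_t\in L^1(t^\ast,\infty;V')$ and $\varphi(t)$ converges in $V'$. Precompactness in $H$ upgrades this to convergence in $H$ to a single $\varphi_\infty\in\omega(z)$ with $\overline\varphi_\infty=\overline\varphi_0$; together with $u(t)\to 0$ this gives $z(t)\to[0,\varphi_\infty]$ in $\mathcal{X}_m$.

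Finally, for the rate I would set $y:=\mathcal{E}-E_\infty\geq 0$. Squaring the displayed \L ojasiewicz bound gives $-y'\geq c\,y^{2(1-\theta)}$; for $\theta\in(0,1/2)$ integration yields $y(t)\leq c\,t^{-1/(1-2\theta)}$ (while $\theta=1/2$ gives exponential decay). Integrating the differential inequality from $t$ to $\infty$ then gives $\int_t^\infty(\Vert\nabla u\Vert+\Vert\nabla\mu\Vert)\,ds\leq c\,y(t)^\theta\leq c\,t^{-\theta/(1-2\theta)}$, so $\Vert\varphi(t)-\varphi_\infty\Vert_{V'}\leq\int_t^\infty\Vert\varphi_s\Vert_{V'}\,ds\leq c\,t^{-\theta/(1-2\theta)}$. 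For the velocity I would use $u_t=-Bu-\nu Su+\mu\nabla\varphi$ with $-u(t)=\int_t^\infty u_s\,ds$ in $V_{div}'$; the bounds $\Vert Bu\Vert_{V_{div}'}\leq c\Vert\nabla u\Vert^2$ and $\Vert\mu\nabla\varphi\Vert_{V_{div}'}=\Vert\varphi\nabla\mu\Vert_{V_{div}'}\leq c\Vert\nabla\mu\Vert$ (since $\mu\nabla\varphi+\varphi\nabla\mu$ is a gradient, hence annihilated on $V_{div}$), together with $\nu\int_t^\infty\Vert\nabla u\Vert^2\,ds\leq y(t)$ which decays faster, give $\Vert u(t)\Vert_{V_{div}'}\leq c\,t^{-\theta/(1-2\theta)}$, i.e. \eqref{convrate}. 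The main obstacle is the mismatch of topologies: Lemma~\ref{LS} lives in the $H$-ball, whereas the dissipation controls the velocity $\varphi_t$ only in $V'$. This is exactly what forces the combined use of the precompactness of the trajectory and the covering of $\omega(z)$ by finitely many \L ojasiewicz balls, rather than working with a single equilibrium point.
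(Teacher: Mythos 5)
Your proposal is correct and follows essentially the same route as the paper: the same decomposition $E=\mathcal{G}_1+\mathcal{G}_2$ with $\mathcal{G}_1(\varphi)=\int_\Omega(F(\varphi)+\tfrac12 a\varphi^2)$ and $\mathcal{K}\varphi=-J\ast\varphi$ in the setting of Lemma \ref{LS}, the same use of \eqref{est35} to place $\varphi(t)$ in the admissible set $U_m$, the same differential inequality $-\frac{d}{dt}\Phi^\theta\geq c(\Vert\nabla u\Vert+\Vert\nabla\mu\Vert)$ leading to $\varphi_t\in L^1(\overline{t},\infty;V')$, and the same rate computation (including the observation that $\mu\nabla\varphi$ acts on $V_{div}$ as $-\varphi\nabla\mu$). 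The only genuine divergence is in how you guarantee that the \L ojasiewicz inequality is applicable along the whole trajectory for large times: you cover the compact set $\omega(z)$ by finitely many \L ojasiewicz balls, take the worst exponent and constant, and invoke $\mathrm{dist}_H(\varphi(t),\omega(z))\to 0$ together with the constancy of $E$ on $\omega(z)$; the paper instead fixes a single $z_\infty\in\omega(z)$ and runs the classical exit-time contradiction argument built on \eqref{deftast} and the estimates \eqref{co8}. Both are standard and correct here; the covering argument is arguably cleaner and avoids the case analysis at the exit time, while the paper's single-point argument identifies the limit $z_\infty$ directly rather than a posteriori. Your parenthetical treatment of the endpoint case $\theta=1/2$ (exponential decay, hence the stated polynomial rate a fortiori) also matches what the paper implicitly assumes.
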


\begin{proof}

Our aim is to prove that $\varphi_t\in L^1(\tau,\infty;V')$, for
some $\tau>0$. This, together with \eqref{co0} and with the
precompactness of the trajectory in $G_{div}\times H$, will allow
to deduce the convergence in $G_{div}\times H$ of a whole
trajectory $z=[u,\varphi]$ originating from an initial datum
$z_0=[u_0,\varphi_0]\in\mathcal{X}_m$ to a stationary solution
$z_\infty\in\mathcal{E}_m$ with
$\overline{\varphi}_\infty=\overline{\varphi}_0$.
Observe that if $z:[0,\infty)\to\mathcal{X}_m$
is a weak solution, then the convergence condition
$z(t)\to z_\infty$ in $\mathcal{X}_m$ is equivalent
to the condition $z(t)\to z_\infty$ in $G_{div}\times H$,
since the convergence $\int_\Omega F(\varphi(t))\to\int_\Omega F(\varphi_\infty)$
is ensured by \eqref{est35} and Lebesgue's dominated convergence theorem.

The key point is the application of Lemma \ref{LS} to a suitable functional
$\mathcal{F}$ which is, in our case, the energy functional $E$ associated with
the $\varphi$ component of the solution, namely,
\begin{align}
&E(\varphi)=\frac{1}{2}\Vert\sqrt{a}\varphi\Vert^2-\frac{1}{2}(\varphi,J\ast\varphi)+\int_\Omega F(\varphi).
\end{align}
More precisely, we set (cf. Lemma \ref{LS})
\begin{align}
& H:= H'=L^2(\Omega),\qquad H_0:=\{\psi\in H:\overline{\psi}=0\},\qquad H_0^0=\{\psi=\mbox{const}\},\nonumber\\
& V= L^\infty(\Omega),\qquad W:=R(V),\qquad\Vert f\Vert_W:=\Vert R^{-1}f\Vert_V,\nonumber\\
& \mathcal{G}_1(\psi):=\int_\Omega\Big(F(\psi)+\frac{1}{2}a\psi^2\Big),\qquad
U=U_m:=\{\psi\in V: |\psi(x)|< C_0(m),\:\:\:\mbox{a.e. }\: x\in\Omega\},\nonumber\\
& \mathcal{K}(\psi):=-J\ast\psi,\qquad l=\rho=0,
\end{align}
where the positive constant $C_0(m)$ is the same as in \eqref{est35}.

All the assumptions of Lemma \ref{LS} are fulfilled. Indeed,
$\mathcal{G}_1$ is Fr\'{e}chet differentiable on the whole $V$
with $D\mathcal{G}_1(\varphi)\in W$, for all $\varphi\in V$
given by
$$\langle D\mathcal{G}_1(\varphi),h\rangle=\int_\Omega \big(F'(\varphi)+a\varphi\big)h,\qquad\forall h\in V.$$
Furthermore, $D\mathcal{G}_1$ is a real analytic operator, since $F$ is assumed real analytic, and we have
\begin{align}
&\langle
D\mathcal{G}_1(\varphi_2)-D\mathcal{G}_1(\varphi_1),\varphi_2-\varphi_1\rangle
=\int_\Omega\big( F''(\eta\varphi_2+(1-\eta)\varphi_1)+a\big)|\varphi_2-\varphi_1|^2\nonumber\\
&\geq c_0\Vert\varphi_2-\varphi_1\Vert^2,\qquad\forall \varphi_1,\varphi_2\in V,\nonumber
\end{align}
thanks to (H2), where $\eta=\eta(x)\in(0,1)$. Hence \eqref{co11} is satisfied (with $\alpha_1=c_0$).
As far as \eqref{co12} is concerned, observe that $D\mathcal{G}_1$ is locally
Lipschitz from $V$ to $H'$. Indeed, we have
\begin{align}
& \Vert D\mathcal{G}_1(\varphi_2)-D\mathcal{G}_1(\varphi_1)\Vert_{H'}
\leq \Vert F'(\varphi_2)-F'(\varphi_1)\Vert+a_\infty\Vert\varphi_2-\varphi_1\Vert
\leq \Gamma_m\Vert\varphi_2-\varphi_1\Vert^2,\nonumber
\end{align}
for all $\varphi_1,\varphi_2\in U_m$, which yields \eqref{co12} (with $\alpha_2=\Gamma_m$).
Moreover, the second Fr\'{e}chet derivative is given by
$$\langle D^2\mathcal{G}_1(\varphi)h_1,h_2\rangle=\int_\Omega\big(F''(\varphi)+a\big)h_1 h_2,\qquad\forall
h_1,h_2\in V,$$
for all $\varphi\in V$. Hence $D^2\mathcal{G}_1(\varphi)\in\mathcal{L}(V,W)$
is an isomorphism for all $\varphi\in U_m$.
Finally, thanks to (H1), the convolution operator $\mathcal{K}$ is compact from $H$ to $H$
and also from $V$ to $W$ (due to the compact embedding $W^{1,\infty}(\Omega)\hookrightarrow\hookrightarrow C(\overline{\Omega})$).
The Fr\'{e}chet derivative of $\mathcal{F}=E$ is given by
\begin{align}
& D E(\varphi)=F'(\varphi)+a\varphi-J\ast\varphi=\mu,
\end{align}
and we have that $[\varphi_\infty,\mu_\infty]\in U_m\times H_0^0$ satisfy
 $DE(\varphi_\infty)=\mu_\infty$ iff $z_\infty:=[0,\varphi_\infty]\in\mathcal{E}_m$ with
 $\varphi_\infty\in U_m$ and $\mu_\infty=\overline{F'(\varphi_\infty)}$.
Therefore, taking $[\varphi_\infty,\mu_\infty]\in U_m\times H_0^0$ such that
 $DE(\varphi_\infty)=\mu_\infty$, Lemma \ref{LS} entails the existence of
 $\sigma,\lambda>0$ and $\theta\in(0,1/2]$ such that
  \begin{align}
&|E(\varphi)-E(\varphi_\infty)|^{1-\theta}
\leq\lambda\inf\big\{\Vert
\mu-\widetilde{\mu}\Vert,\:\:\widetilde{\mu}=\mbox{const}\big\}
=\lambda\Vert\mu-\overline{\mu}\Vert\leq \lambda c_p\Vert\nabla\mu\Vert,
\label{LSappl}
 \end{align}
for all $\varphi\in U_m$ satisfying $\overline{\varphi}=\overline{\varphi_\infty}$
(i.e. $\varphi-\varphi_\infty\in H_0$)
and $\Vert\varphi-\varphi_\infty\Vert_H\leq\sigma$, where $c_p$ is the Poincar\'{e}-Wirtinger constant.

Now, let $z_0\in\mathcal{X}_m$ and $z$ be a weak solution corresponding
to $z_0$. Take $z_\infty\in\omega(z)$ and let $\{t_n\}$ be a
sequence such that $t_n\to\infty$ and $z(t_n)\to z_\infty$ in
$\mathcal{X}_m$. Consider the function
\begin{align}
&\Phi(t):=\mathcal{E}(z(t))-\mathcal{E}(z_\infty).\nonumber
\end{align}
We have
\begin{align}
&\Phi'(t)=-\nu\Vert\nabla u\Vert^2-\Vert\nabla\mu\Vert^2\leq
-c_\nu(\Vert\nabla u\Vert+\Vert\nabla\mu\Vert)^2\leq
0,\qquad\mbox{for a.a. }t>0,
\label{co5}
\end{align}
where $c_\nu=\min\{1,\nu\}/2$. Since $\Phi(t_n)\to 0$ and $\Phi$ is
non-increasing in $(0,\infty)$, then $\Phi(t)\to 0$, as
$t\to\infty$ and $\Phi\geq 0$. Now, due to \eqref{co0} and to \eqref{LSappl} (notice that $2(1-\theta)>1$), we have
\begin{align}
\Phi^{1-\theta}(t)&=\Big(\frac{1}{2}\Vert
u(t)\Vert^2+E(\varphi(t))-E(\varphi_\infty)\Big)^{1-\theta} \nonumber\\
&\leq \Vert u(t)\Vert^{2(1-\theta)}+|E(\varphi(t))-E(\varphi_\infty)|^{1-\theta}
\nonumber\\
&\leq c_\lambda\Big(\Vert\nabla u\Vert+\Vert\nabla\mu\Vert
\Big),
\label{co6}
\end{align}
for all $t\geq t_0$, for some $t_0>0$, provided that $\Vert\varphi(t)-\varphi_\infty\Vert<\sigma$,
where
 $c_\lambda
=\max\{1/\sqrt{\lambda_1},\lambda c_p\}$.
Therefore, by combining \eqref{co5} and \eqref{co6} we get
\begin{align}
&-\frac{d}{dt}\Phi^\theta(t)=-\theta\Phi^{\theta-1}(t)\Phi'(t)\geq\frac{\theta c_\nu}{c_\lambda}
\Big(\Vert\nabla u(t)\Vert+\Vert\nabla\mu(t)\Vert\Big),
\label{co7}
\end{align}
provided that $\varphi(t)\in U_m$ with $\Vert\varphi(t)-\varphi_\infty\Vert<\sigma$
and $\overline{\varphi}(t)=\overline{\varphi}_\infty=\overline{\varphi}_0$.
By means of a classical argument, together with equations \eqref{sy1} and \eqref{sy2},
we can now deduce that $\varphi_t\in L^1(\tau,\infty;V')$. Indeed,
for every $\delta\in(0,1)$ there exists $N=N_\delta$ such that for all $n\geq N_\delta$ we have
$\Vert u(t_n)\Vert<\delta$ and $\Vert\varphi(t_n)-\varphi_\infty\Vert<\delta$. Set
\begin{align}
&t^\ast=t^\ast(\delta):=\sup\big\{t\geq t_N:\Vert u(s)\Vert<1,\:\:\Vert\varphi(s)-\varphi_\infty\Vert<\sigma,
\quad\forall s\in[t_N,t]\big\}.
\label{deftast}
\end{align}
Then, estimate \eqref{co7} holds for all $t\in[t_N,t^\ast]$. By integrating it between $t_N$ and $t^\ast$
and possibly choosing a larger $N$ we have
\begin{align}
&\int_{t_N}^{t^\ast}\Big(\Vert\nabla u(\tau)\Vert+\Vert\nabla\mu(\tau)\Vert\Big)d\tau\leq\frac{c_\lambda}{\theta c_\nu}\Phi^\theta(t_N)<\delta.
\end{align}
We now claim that there exists $\delta_\ast>0$ such that $t^\ast(\delta_\ast)=\infty$. Indeed, suppose
this is not true, i.e. $t^\ast(\delta)<\infty$ for all $\delta>0$. Then, we have
\begin{align}
\int_{t_{N}}^{t^\ast}\Vert u_t(\tau)\Vert_{V_{div}'}d\tau
&\leq\int_{t_{N}}^{t^\ast}\Big(\nu\Vert\nabla u(\tau)\Vert+c\Vert u(\tau)\Vert\Vert\nabla u(\tau)\Vert
+\Vert\varphi(\tau)\Vert_{L^\infty(\Omega)}\Vert\nabla\mu(\tau)\Vert\Big)d\tau\nonumber\\
&\leq b_1 \int_{t_{N}}^{t^\ast}\Big(\Vert\nabla u(\tau)\Vert+\Vert\nabla\mu(\tau)\Vert\Big)d\tau
\leq b_1\delta,
\end{align}
where $b_1=\max\big\{\nu+c\Lambda_1(m)/\sqrt{\lambda_1},C_0(m)\big\}$,
and where $N_\delta$ is assumed large enough, i.e., such that $t_{N_\delta}\geq t_1(\mathcal{E}(z_0))$ (see \eqref{est35}).
Furthermore, we have
\begin{align}
&\int_{t_{N}}^{t^\ast}\Vert\varphi_t(\tau)\Vert_{V'}d\tau
\leq\int_{t_{N}}^{t^\ast}\Big(\Vert\nabla\mu(\tau)\Vert+\Vert\varphi(\tau)\Vert_{L^\infty}\Vert u(\tau)\Vert\Big)d\tau\nonumber\\
&\leq b_2\int_{t_{N}}^{t^\ast}\Big(\Vert\nabla u(\tau)\Vert+\Vert\nabla\mu(\tau)\Vert\Big)d\tau
\leq b_2\delta,
\end{align}
where $b_2=\max\big\{1,C_0(m)/\sqrt{\lambda_1}\big\}$.
Therefore, we deduce
\begin{align}
&\Vert u(t^\ast)\Vert_{V_{div}'}\leq\Vert u(t_N)\Vert_{V_{div}'}+\int_{t_N}^{t^\ast}\Vert u_t(\tau)\Vert_{V_{div}'}d\tau
\leq b_3\delta,\\
&\Vert \varphi(t^\ast)-\varphi_\infty\Vert_{V'}\leq
 \Vert \varphi(t_N)-\varphi_\infty\Vert_{V'}+\int_{t_N}^{t^\ast}\Vert\varphi_t(\tau)\Vert_{V'}d\tau
 \leq b_4\delta,
 \label{co8}
\end{align}
where $b_3=1/\sqrt{\lambda_1}+b_1$ and $b_4=1+b_2$. Let us now take a sequence $\{\delta_n\}$ such that $\delta_n\to 0$. Then,
from definition \eqref{deftast},
for every $n$ at least one of the following two conditions holds
\begin{align}
&\Vert u(t^\ast(\delta_n))\Vert=1,\qquad\Vert\varphi(t^\ast(\delta_n))-\varphi_\infty\Vert=\sigma.
\end{align}
By possibly extracting a subsequence we have, e.g., $\Vert\varphi(t^\ast(\delta_n))-\varphi_\infty\Vert=\sigma$.
Writing \eqref{co8} with $\delta=\delta_n$ and taking into account the precompactness
of the trajectory in $G_{div}\times H$ we get a contradiction. Thus, for some $\delta_\ast>0$ we have
(setting $\overline{t}:=t_{N_{\delta_\ast}}$)
\begin{align}
&\int_{\overline{t}}^{\infty}\Big(\Vert\nabla
u(\tau)\Vert+\Vert\nabla\mu(\tau)\Vert\Big)
d\tau<\delta_\ast<\infty,
\end{align}
so that
\begin{align}
& u\in L^1(\overline{t},\infty;V_{div}),\qquad \nabla\mu\in
L^1(\overline{t},\infty;H).
\end{align}
This implies that $\varphi_t\in L^1(\overline{t},\infty;V')$,
due \eqref{reg5}$_2$ and to the estimate
$$\Vert\varphi_t\Vert_{V'}\leq\Vert\nabla\mu\Vert+c\Vert\varphi\Vert_V\Vert\nabla u\Vert.$$
By using the precompactness of the trajectory in $G_{div}\times H$
again, we deduce that $\varphi(t)\to\varphi_\infty$ in $H$ as
$t\to\infty$. Therefore we have $z(t)\to z_\infty$ in
$\mathcal{X}_m$ as $t\to\infty$.
 We now provide an estimate for the convergence rate in
 $V_{div}'\times V'$. Indeed, from \eqref{co5} and \eqref{co6} we
 deduce
 \begin{align}
&\Phi'(t)\leq-\frac{c_\nu}{c_\lambda^2}\Phi^{2(1-\theta)}(t),\qquad\forall
t>\overline{t}\nonumber
 \end{align}
which yields by integration
\begin{align}
&\Phi(t)\leq\Phi(0)\big\{1+b_5\Phi^{1-2\theta}(0)t\big\}^{-\frac{1}{1-2\theta}},\qquad\forall
t>\overline{t}, \label{co9}
\end{align}
where $b_5=c_\nu(1-2\theta)/c_\lambda^2$. On the other hand, by
integrating \eqref{co7} from $t\geq\overline{t}$ to $\infty$
we get
\begin{align}
&\int_t^{\infty}\Big(\Vert\nabla
u(\tau)\Vert+\Vert\nabla\mu(\tau)\Vert\Big)
d\tau=\frac{c_\lambda}{\theta c_\nu}\Phi^\theta(t),\qquad\forall
t>\overline{t}.
\end{align}
Finally, we obtain
\begin{align}
&\Vert u(t)\Vert_{V_{div}'}\leq\int_t^{\infty}\Vert
u_t(\tau)\Vert_{V_{div}'} d\tau\leq
b_1\int_t^{\infty}\Big(\Vert\nabla
u(\tau)\Vert+\Vert\nabla\mu(\tau)\Vert\Big)d\tau,\\
&\Vert
\varphi(t)-\varphi_\infty\Vert_{V'}\leq\int_t^{\infty}\Vert
\varphi_t(\tau)\Vert_{V'} d\tau\leq
b_2\int_t^{\infty}\Big(\Vert\nabla
u(\tau)\Vert+\Vert\nabla\mu(\tau)\Vert\Big)d\tau.\label{co10}
\end{align}
By combining \eqref{co9}--\eqref{co10} we deduce the
convergence rate estimate \eqref{convrate} with
$\overline{\gamma}=(b_1+b_2)c_\lambda\theta^{-1}c_\nu^{-1} b_5^{-\theta/(1-2\theta)}$.
\end{proof}

\begin{oss}
{\upshape By using standard interpolation inequalities one can deduce from \eqref{convrate} convergence rate estimates in stronger norms. Of course, the convergence exponent deteriorates.}
\end{oss}

\bigskip

\noindent {\bf Acknowledgments.} The first author was supported by the
FTP7-IDEAS-ERC-StG Grant $\sharp$200497(BioSMA) and the
FP7-IDEAS-ERC-StG Grant \#256872 (EntroPhase). He is also grateful
for the support received during his visit to the
Institute of Mathematics of the Academy of Sciences of the Czech Republic in Prague
(GA\v{C}R Grant P201/10/2315 and RVO: 67985840).

\end{document}